\numberwithin{figure}{section}
\numberwithin{equation}{section}
\newtheorem{theorem}{Theorem}[section]
\newtheorem{lemma}[theorem]{Lemma}
\theoremstyle{definition}
\def\bax#1\eax{\begin{align*}#1\end{align*}}
\def\bal#1\eal{\begin{align}#1\end{align}}
\def\bas#1#2\eas{\begin{align}\label{#1}\begin{split}#2\end{split}\end{align}}
\newcommand{\Rm}[1]{
  \textup{\uppercase\expandafter{\romannumeral#1}}
}
\newcommand{\diff}{\,\mathrm{d}}
\newcommand{\Dissip}{\mathcal{D}}
\newcommand{\N}{\mathbb{N}}
\newcommand{\pt}{\partial_t}
\newcommand{\R}{\mathbb{R}}
\newcommand{\T}{\mathbb{T}}
\newcommand{\ve}{\varepsilon}
\newcommand{\Z}{\mathbb{Z}}
\newcommand{\be}{\begin{equation}}
\newcommand{\ee}{\end{equation}}
\newcommand{\la}{\label}
\newcommand{\fr}{\frac}
\DeclareMathOperator{\curl}{\mathrm{curl}}
\title[The Nernst-Planck-Darcy System]{Charged Fluids in Porous Media}
\date{\today}
\author{Mihaela Ignatova}
\address{Department of Mathematics, Temple University, Philadelphia, PA 19122}
\email{ignatova@temple.edu}
\author{Jingyang Shu}
\address{Department of Mathematics, Temple University, Philadelphia, PA 19122}
\email{jyshu@temple.edu}
\begin{document}

\begin{abstract}
The Nernst-Planck-Darcy system models ionic electrodiffusion in porous media. We consider the system for two ionic species with opposite valences. We prove that the initial value problem for the Nernst-Planck-Darcy system in periodic domains in two or three dimensions has global weak solutions in $W^{1, p}$ ($p \geq 2$). We obtain furthermore global existence and uniqueness of smooth solutions for arbitrary large data.
\end{abstract}

\maketitle

\section{Introduction}

In this paper, we consider the Nernst-Planck (NP) system
\begin{align}
& \pt c_i + \nabla \cdot \left(u c_i - D_i \nabla c_i - z_i D_i c_i \nabla \Phi\right) = 0, \qquad i = 1, \dotsc, N, \label{NP}\\
& - \ve \Delta \Phi = \rho, \label{poisson0}\\
& \rho = \sum_{i = 1}^N z_i c_i, \label{charge}
\end{align}
coupled with Darcy's law
\begin{align}
& u + \nabla p = - \rho \nabla \Phi, \label{darcy0}\\
& \nabla \cdot u = 0, \label{incomp0}
\end{align}
where $c_i \colon \T^d \times [0, T] \to \R^+$ are the $i$-th ionic species concentrations, $z_i \in \Z$ are corresponding valences, $D_i > 0$ are constant diffusivities, $\Phi \colon \T^d \times [0, T] \to \R$ is the nondimensional electrical potential, $\rho \colon \T^d \times [0, T] \to \R$ is the nondimensional charge density, $u \colon \T^d \times [0, T] \to \T^d$ is the fluid velocity field, $p \colon \T^d \times [0, T] \to \R$ is the fluid pressure, and $\ve > 0$ is a constant proportional to the square of the Debye length \cite{CI19,Rub90}. Here, $d = 2, 3$ is the space dimension and $\T^d$ is a $d$-dimensional torus. 

We refer to the equations \eqref{NP}--\eqref{incomp0} as the Nernst-Planck-Darcy (NPD) equations. The NPD system models the transport and diffusion of ions by incompressible flow in porous media. The Nernst-Planck equation \eqref{NP} describes the evolution of ions which are carried by an incompressible fluid and interact among themselves via an electric potential and thermal fluctuations. The Poisson-Boltzmann equation \eqref{poisson0}--\eqref{charge} relates the electrostatic potential to the ionic charge density. Darcy's law replaces the viscous dissipation with friction, representing the damping of flow through porous media. We take the friction coefficient to be one, for simplicity. Darcy's law \eqref{darcy0} and the incompressibility condition \eqref{incomp0} describe then the flow of an incompressible fluid through a porous medium forced by the macroscopic electrostatic Lorentz force created by the ions.

Ionic electrodiffusion is important to many fields, including biology, chemistry and physics, and has wide applications \cite{Rub90}. There are extensive mathematical studies of models coupling the Nernst-Planck equations with various fluid dynamical systems. For the fluids that are described by the Navier-Stokes equations, the system is known as the Nernst-Planck-Navier-Stokes (NPNS) system. In the whole space $\R^d$ ($d = 2, 3$), local existence of solutions of the NPNS system is obtained in \cite{Jer02}, and later weak solutions are proved to exist globally in time \cite{LW20}, with some $L^2$ decay if the dimension is two \cite{ZY15b}. In bounded domains $\Omega \subset \R^d$, the NPNS system has global solutions under various appropriate boundary conditions. For example, with blocking boundary conditions, global weak solutions exist in both two and three dimensions \cite{FS17, JS09}. For ionic concentrations satisfying blocking boundary conditions while the electrical potential satisfying the Dirichlet boundary condition, global weak solutions exist in two and three dimensions if the initial data is small \cite{Ryh09p}. In the case of blocking boundary conditions for the ionic concentrations and homogeneous Neumann boundary condition for the electrical potential, weak solutions are global in two dimensions \cite{Sch09}. Moreover, with blocking boundary conditions for the ionic concentrations and Robin boundary condition for the electrical potential, two dimensional strong solutions are global \cite{BFS14}. The same result holds in three dimensions if the fluid velocity remains regular for all time \cite{Lee21p}. When the ionic concentrations satisfy either the blocking boundary conditions or the uniformly selective boundary conditions, strong solutions are global in two dimensions \cite{CI19}, and in three dimensions provided that the initial data is a small perturbation of a steady state \cite{CIL20pa}. If both the ionic concentrations and the electrical potential obey the Dirichlet boundary conditions, global strong solutions exist in three dimensions as long as the fluid velocity is regular \cite{CIL20pb}. With periodic boundary conditions, two dimensional strong solutions exist globally in time, and long time behaviors of the solutions are studied in \cite{AI20p} under the influence of body forces or body charges. Some zero Debye length limit ($\ve \to 0$ in \eqref{poisson0}) results for the NPNS system are proved in \cite{CIL20pc, Li09, WJ21, WJL19}. In the limit of zero viscosity in the Navier-Stokes equations, the solutions of NPNS system in two dimensions converges to the solutions of the corresponding Nernst-Planck-Euler (NPE) system, whose solutions exist and are global \cite{IS21p, ZY15, ZY20}. For the Nernst-Planck system coupled with time dependent Stokes equations, solutions are known existing globally in time in three dimensions \cite{CIL20pb, Lee21p}.

For the NPD system, global existence of weak solutions exist in $L^2$-based Sobolev spaces \cite{HK16pa, HK16pb, HRK12}.

We focus on the initial value problem for the NPD equations \eqref{NP}--\eqref{incomp0} in two or three space dimensions ($d = 2, 3$) with two ionic species ($N = 2$) with opposite valences ($z_1 = - z_2 = 1$) and with equal diffusivities ($D_1 = D_2 = D$). The initial data of the system is
\begin{align}
\label{initc}
& c_i(\cdot , 0) = c_i(0), \quad i = 1, 2,
\end{align}
where the ionic concentrations are nonnegative, $c_i(0) \geq 0$, and the electric charge obeys
\begin{align}
\label{neut}
\int_{\T^d} \rho(x, 0) \diff{x} = \sum_{i = 1}^2 \int_{\T^d} z_i c_i(x, 0) \diff{x} = \int_{\T^d} c_1(x, 0) - c_2(x, 0) \diff{x} = 0.
\end{align}
It follows from \eqref{NP} that the property \eqref{neut} is preserved in time.
For regular solutions of NPNS it is shown in \cite{CI19, CIL20pb} that if $c_i(0) \geq 0$, then $c_i(x, t)$ remains nonnegative for $t > 0$. This property follows from \eqref{NP} if $c_i$ are known to be sufficiently regular, and the same proof and result holds for the NPD equations.

We denote $\rho = c_1 - c_2$, $\sigma = c_1 + c_2$.  Using \eqref{incomp0}, the Nernst-Planck system \eqref{NP} is equivalent to the equations
\begin{align}
\begin{split}
\pt \rho & = - u \cdot \nabla \rho + D (\Delta \rho + \nabla \sigma \cdot \nabla \Phi + \sigma \Delta \Phi),
\end{split} \label{rhot}\\
\begin{split}
\pt \sigma & = - u \cdot \nabla \sigma + D (\Delta \sigma + \nabla \rho \cdot \nabla \Phi + \rho \Delta \Phi).
\end{split} \label{sigmat0}
\end{align}
We have from \eqref{poisson0} that
\begin{align}
\label{poisson}
- \ve \Delta \Phi = \rho,
\end{align}
and from \eqref{darcy0}--\eqref{incomp0} that
\begin{align}
& u + \nabla p = - \rho \nabla \Phi, \label{darcy}\\
& \nabla \cdot u = 0. \label{incomp}
\end{align}
The system \eqref{rhot}--\eqref{incomp} has initial data from \eqref{initc},
\begin{align}
\label{initial}
\begin{split}
& \rho(\cdot, 0) = \rho(0) = c_1(0) - c_2(0),\\
& \sigma (\cdot, 0) = \sigma(0) = c_1(0) + c_2(0),
\end{split}
\end{align}
and 
\[
c_1 = \frac{\sigma + \rho}{2} \quad \text{and} \quad c_2 = \frac{\sigma - \rho}{2}
\]
solve the original Nernst-Planck-Darcy system \eqref{NP}--\eqref{incomp0}.

We fix the parameters $\ve > 0$ and $D > 0$ in \eqref{rhot}--\eqref{poisson} and will not specify them in the rest of the paper.

In this paper, we establish the existence of global weak solutions in $W^{1, r}$ Sobolev spaces and the existence and uniqueness of strong solutions. Both results hold for arbitrary large data. The main theorems are the following.

\begin{theorem}
\label{thm:weakglobal}
Let $d = 2, 3$ and $r \geq 2$. Let $c_1(0), c_2(0) \in W^{1, r}(\T^d)$ be nonnegative functions satisfying \eqref{neut}. Then for any $T > 0$, there exist $c_1(x,t)\ge 0$, $c_2(x,t)\ge 0$ and $u(x,t)$, divergence free, such that $c_1 - c_2 = \rho$ and $c_1 + c_2 = \sigma$ obey $\rho, \sigma \in L^{\infty}([0, T]; W^{1, r}(\T^d)) \cap L^2(0, T; H^2(\T^d))$,  $u(x,t)$ obeys $u \in L^{\infty}([0, T]; W^{1, r}(\T^d))$, and $(\rho, \sigma, u)$ solve the initial value problem \eqref{rhot}--\eqref{initial} in the sense of distributions. The charge density $\rho$ and total concentration $\sigma$ satisfy the following bounds
\begin{align}
\label{solest1}
\begin{split}
& (i) \quad \|\rho(t)\|_{L^p} + \left\|\sigma(t) - \bar{\sigma}\right\|_{L^p} \leq C_p e^{- C' t}, \quad \quad \forall p\ge 2,\\
& (ii) \quad \|\nabla \Phi(t)\|_{L^\infty} \leq C e^{- C' t},\\
& (iii) \quad \|\nabla \rho(t)\|_{L^2}^2 + \|\nabla \sigma(t)\|_{L^2}^2 + \int_0^t \|\Delta \rho(\tau)\|_{L^2}^2 + \|\Delta \sigma(\tau)\|_{L^2}^2 \diff{\tau} \leq C,\\
& (iv) \quad \left\|\nabla \rho(t)\right\|_{L^r} + \left\|\nabla \sigma(t)\right\|_{L^r} \leq C e^{C t},
\end{split}
\end{align}
with  constants $C, C_p > 0$ depending on $D$, $\ve$, $p$, $r$, and the initial data $\|\rho(0)\|_{W^{1, r}}$ and $\|\sigma(0)\|_{W^{1, r}}$ and with $C'>0$ bounded below independently of $p$. Moreover, the fluid velocity satisfies the bound
\begin{align}
\label{solest2}
\|\nabla u(t)\|_{L^r} \leq C e^{C'' t},
\end{align}
where $C'' \in \R$ is a constant depending on $\|\rho(0)\|_{W^{1, r}}$ and $\|\sigma(0)\|_{W^{1, r}}$.
\end{theorem}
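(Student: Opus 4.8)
The plan is to construct solutions via a regularized (or Galerkin) approximation, derive the a priori bounds (i)--(iv) and \eqref{solest2} uniformly in the approximation parameter, and pass to the limit by Aubin--Lions compactness; the preservation of $c_i \ge 0$ (and hence the pointwise bound $\sigma = c_1+c_2 \ge |c_1-c_2| = |\rho|$) is inherited from the approximate problem. Throughout I exploit two structural facts. First, the Poisson relation \eqref{poisson} turns every occurrence of $\Delta\Phi = -\rho/\ve$ into a zeroth-order term in $\rho$. Second, \eqref{darcy}--\eqref{incomp} are solved by $u = \mathbb{P}(-\rho\na\Phi)$ with $\mathbb{P}$ the Leray projector, and since $-\rho\na\Phi = \ve\,\na\cdot(\na\Phi\otimes\na\Phi) - \na(\tfrac{\ve}{2}|\na\Phi|^2)$, the gradient is annihilated and $u = \ve\,\mathbb{P}\,\na\cdot(\na\Phi\otimes\na\Phi)$, a quadratic, zeroth-order (in $\rho$) expression for the velocity.

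The heart of the decay estimate (i) is an exact cancellation at the $L^2$ level. Testing \eqref{rhot} with $\rho$ and \eqref{sigmat0} with $\sigma-\bar\sigma$, the transport terms vanish by $\na\cdot u = 0$, the coupling gradient terms combine through $\int_{\T^d}\na(\rho\sigma)\cdot\na\Phi = \ve^{-1}\int_{\T^d}\rho^2\sigma$, and all indefinite contributions collapse, leaving
\[
\tfrac12\tfrac{d}{dt}\big(\|\rho\|_{L^2}^2 + \|\sigma-\bar\sigma\|_{L^2}^2\big) + D\|\na\rho\|_{L^2}^2 + D\|\na\sigma\|_{L^2}^2 = -\tfrac{D}{\ve}\int_{\T^d}\sigma\rho^2 \le 0,
\]
since $\sigma\ge0$. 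As $\rho$ and $\sigma-\bar\sigma$ have zero mean, Poincar\'e gives exponential $L^2$ decay at a rate $C'$. To obtain the same rate in every $L^p$ I first extract uniform-in-time $L^p$ bounds from the positivity structure: testing the $c_i$-equations with $c_i^{p-1}$ and summing, the electromigration terms reduce to $-\tfrac{D(p-1)}{p\ve}\int_{\T^d}(c_1^p - c_2^p)(c_1-c_2) \le 0$, so $\|c_1\|_{L^p}^p + \|c_2\|_{L^p}^p$ is nonincreasing; in particular $\|\na\Phi\|_{L^\infty} \lesssim \|\rho\|_{L^p}$ ($p>d$) stays bounded. Parabolic smoothing (Moser iteration) on unit time intervals, with these bounded coefficients, then upgrades the $L^2$ decay to $L^\infty$ decay at the same rate $C'$, and interpolating $\|\rho\|_{L^p} \le \|\rho\|_{L^2}^{2/p}\|\rho\|_{L^\infty}^{1-2/p}$ yields (i) for all $p$ with a $p$-independent rate, whence (ii) follows from $\|\na\Phi\|_{L^\infty}\lesssim\|\rho\|_{L^p}$, $p>d$.

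Estimate (iii) is a second-order energy estimate: testing \eqref{rhot} with $-\Delta\rho$ and \eqref{sigmat0} with $-\Delta\sigma$ produces the dissipation $D(\|\Delta\rho\|_{L^2}^2 + \|\Delta\sigma\|_{L^2}^2)$, against which the transport terms $\int u\cdot\na\rho\,\Delta\rho$ and the coupling terms $\int(\na\sigma\cdot\na\Phi)\Delta\rho$ and $\ve^{-1}\int\sigma\rho\,\Delta\rho$ are absorbed by Gagliardo--Nirenberg interpolation, using $\|u\|_{L^\infty}\lesssim\|\rho\|_{L^q}\|\na\Phi\|_{L^\infty}$ and the decaying lower-order norms from (i)--(ii); integrating in time gives the uniform $H^1$ bound together with the $L^2(0,T;H^2)$ bound. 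For (iv) I differentiate the equations and test with $|\na\rho|^{r-2}\na\rho$ and $|\na\sigma|^{r-2}\na\sigma$; the worst contributions pair $\na u$ and $\na^2\Phi$ against $|\na\rho|^{r-1}$, and using $\|\na^2\Phi\|_{L^r}\lesssim\|\rho\|_{L^r}$ with the $L^\infty$ control of $\na\Phi$ one closes a Gr\"onwall inequality with exponential-in-time growth. Finally \eqref{solest2} follows from the Calder\'on--Zygmund bound $\|\na u\|_{L^r}\lesssim\|\na\rho\|_{L^r}\|\na\Phi\|_{L^\infty} + \|\rho\|_{L^\infty}\|\na^2\Phi\|_{L^r}\lesssim e^{Ct}$ via (iv).

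The main obstacle I anticipate is closing estimate (iv) in three dimensions: the advection term combined with the quadratic velocity--potential coupling sits at the borderline of the available interpolation, and one must spend the gain from the $L^2(0,T;H^2)$ bound of (iii) and the boundedness/decay of (i)--(ii) to keep the constants under control. A secondary difficulty is the $p$-independent decay rate in (i): a naive interpolation between a fixed $L^2$ decay and a fixed higher $L^p$ bound degrades the rate as $p\to\infty$, which is precisely why the positivity-driven uniform $L^p$ bounds and the $L^\infty$ smoothing step are essential. Once these a priori estimates are secured, the approximation and compactness arguments, and the verification that the limit solves \eqref{rhot}--\eqref{initial} in the sense of distributions with $c_i\ge0$, are routine.
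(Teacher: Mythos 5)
Your overall architecture (zeroth/first/second-order energy estimates, then the $W^{1,r}$ bound and the velocity bound) matches the paper's, and your $L^2$ identity with the signed term $-\frac{D}{\ve}\int\sigma\rho^2$ is exactly the paper's starting point. Where you genuinely diverge is in how the $p$-independent decay rate in (i) is obtained. The paper stays entirely at the level of $L^p$ energy inequalities: for $\rho$ it splits $\sigma=\bar\sigma+(\sigma-\bar\sigma)$ and extracts the damping term $-\frac{D\bar\sigma}{(p-1)\ve}\int|\rho|^p$, which gives decay of $\|\rho\|_{L^p}$ at the $p$-independent rate $D\bar\sigma/\ve$ once $\|\nabla\Phi\|_{L^6}\lesssim\|\rho\|_{L^2}$ is known to decay; for $\sigma-\bar\sigma$ it runs a Moser-type induction on dyadic exponents $p=2^j$, feeding the decay of $\|\sigma-\bar\sigma\|_{L^{p/2}}$ into a Gr\"onwall inequality for $\|\sigma-\bar\sigma\|_{L^p}$. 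Your route --- monotonicity of $\|c_1\|_{L^p}^p+\|c_2\|_{L^p}^p$ from $(c_1^p-c_2^p)(c_1-c_2)\ge 0$, followed by parabolic $L^2\to L^\infty$ smoothing and interpolation --- is a valid alternative; the monotonicity observation is correct and arguably cleaner than term-by-term $L^p$ estimates, and it immediately controls $\|\nabla\Phi\|_{L^\infty}$. If you pursue it, the smoothing step must be run on the coupled system with the same splitting $\sigma=\bar\sigma+(\sigma-\bar\sigma)$, since $\sigma$ itself does not decay: only the forcing $D\nabla\cdot((\sigma-\bar\sigma)\nabla\Phi)$ and the damping $-\frac{D\bar\sigma}{\ve}\rho$ have the right behavior. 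Also note that for $d=3$, $r=2$ the initial data lies only in $L^6$, so your uniform $L^p$ bounds are available only for $p\le 6$; this still suffices for $\|\nabla\Phi\|_{L^\infty}\lesssim\|\rho\|_{L^4}$, which is all the smoothing argument needs.

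Two smaller corrections. First, $\|u\|_{L^\infty}\lesssim\|\rho\|_{L^q}\|\nabla\Phi\|_{L^\infty}$ is not available: the Leray projector is unbounded on $L^\infty$, which is why the paper works with $\|u\|_{L^6}\le C\|\rho\|_{L^6}\|\nabla\Phi\|_{L^\infty}$ and interpolates in the advection terms of (iii). Similarly, for \eqref{solest2} the paper avoids your extra term $\|\rho\|_{L^\infty}\|\nabla^2\Phi\|_{L^r}$ (problematic for $r=2$, $d=3$, though fixable by H\"older with $\|\rho\|_{L^{2r}}\|\nabla^2\Phi\|_{L^{2r}}$) by taking the curl of Darcy's law, so that $\|\nabla u\|_{L^r}\le C\|\nabla\rho\times\nabla\Phi\|_{L^r}\le C\|\nabla\rho\|_{L^r}\|\nabla\Phi\|_{L^\infty}$. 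Second, the paper explicitly warns that a plain Galerkin truncation preserves neither positivity nor the nonlinear dissipation $\frac{D}{\ve}\int\sigma\rho^2$ on which (i) rests, and that a two-tier approximation is required; so positivity being ``inherited from the approximate problem'' is not automatic for the scheme you name and would need to be built into the regularization.
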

The existence of globally smooth solutions of three dimensional Nernst-Planck equations with arbitrary large data coupled to Stokes equations driven by the Lorentz force has been obtained only recently in \cite{CIL20pb, Lee21p}. 
Replacing viscous dissipation by friction diminishes the regularizing effect by two differential orders. In this paper we use the nonlinear structure of the NP equations coupled to Darcy's law, in order to prove the global regularity. The construction of solutions can be achieved by a two-tier approximation procedure as in \cite{CEIV17, IS21p}. The two-tier approximation scheme is needed because the global a priori $L^p$-estimate \eqref{solest1}(i) is a consequence of the special nonlinear structure of \eqref{rhot}--\eqref{sigmat0}, and standard approximation schemes do not preserve the resulting nonlinear dissipation (see \eqref{L2estin}). In this paper we focus on the a~priori estimates, which are the heart of the matter.

\begin{theorem}
\label{thm:strongglobal}
Let $d = 2, 3$. Let $c_1(0), c_2(0) \in H^3(\T^d)$ be nonnegative functions satisfying \eqref{neut}. Then for any $T > 0$, there exists a unique strong solution $\rho, \sigma \in L^{\infty}([0, T]; H^3(\T^d)) \cap L^2(0, T; H^4(\T^d))$ and $u \in L^{\infty}([0, T]; H^3(\T^d))$ of the initial value problem \eqref{rhot}--\eqref{initial}. In addition to the bounds \eqref{solest1} for the ionic concentrations, we also have for any $t > 0$,
\bas{3dHsbdd1}
& \|\Delta \rho(t)\|_{L^2} + \|\Delta \sigma(t)\|_{L^2} + \int_0^t \|\nabla \Delta \rho(\tau)\|_{L^2}^2 + \|\nabla \Delta \sigma(\tau)\|_{L^2}^2 \leq C,\\
& \|\nabla \Delta \rho(t)\|_{L^2} + \|\nabla \Delta \sigma(t)\|_{L^2} + \int_0^t \|\Delta^2 \rho(\tau)\|_{L^2}^2 + \|\Delta^2 \sigma(\tau)\|_{L^2}^2 \leq C,
\eas
where $C > 0$ depends only on $\ve$, $D$, and the initial data. For the fluid velocity $u$, in addition to the estimates \eqref{solest2}, we also have for any $t > 0$
\bax
\|u(t)\|_{H^3} \leq C.
\eax
\end{theorem}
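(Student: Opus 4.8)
The plan is to obtain the bounds \eqref{3dHsbdd1} together with the velocity estimate as \emph{a priori} bounds through a sequential energy bootstrap built on top of Theorem~\ref{thm:weakglobal}, then to transfer them to genuine solutions by the two-tier approximation scheme, and finally to prove uniqueness by a difference argument. Throughout I would fix $r$ large (say $r > d$) in Theorem~\ref{thm:weakglobal}, so that $\rho, \sigma, \na\Phi$ and $u$ are bounded in $L^\infty_{t,x}$ and $\na\rho, \na\sigma, \na u$ are bounded in $L^\infty_t L^r_x$, all with the decay recorded in \eqref{solest1}. The two elliptic identities $\Delta\Phi = -\ve^{-1}\rho$ and $u = -\P(\rho\na\Phi)$, coming from \eqref{poisson} and \eqref{darcy}, are the key structural inputs: they let me rewrite every derivative of $\Phi$ in terms of $\rho$ with one derivative gained, and control every derivative of $u$, via the Calderón--Zygmund bounds for $\P$, by the corresponding derivative of the quadratic expression $\rho\na\Phi$.

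First I would establish the $H^2$ bound, the first line of \eqref{3dHsbdd1}. Applying $\Delta$ to \eqref{rhot} and \eqref{sigmat0} and testing with $\Delta\rho$ and $\Delta\sigma$ produces
\be
\fr12 \fr{d}{dt}\left(\|\Delta\rho\|_{L^2}^2 + \|\Delta\sigma\|_{L^2}^2\right) + D\left(\|\na\Delta\rho\|_{L^2}^2 + \|\na\Delta\sigma\|_{L^2}^2\right) = \mathcal{I},
\ee
where $\mathcal{I}$ collects the transport commutators and the differentiated coupling terms. The top-order transport contribution vanishes, since $u\cdot\na\Delta\rho$ tested against $\Delta\rho$ integrates to zero by $\na\cdot u = 0$; the remaining commutator terms, schematically $\int \na u\,\na^2\rho\,\Delta\rho$ and $\int \Delta u\,\na\rho\,\Delta\rho$, together with the coupling terms from $\Delta(\na\sigma\cdot\na\Phi)$ and $\Delta(\sigma\Delta\Phi)$, I would estimate by H\"older and the Gagliardo--Nirenberg inequality, distributing derivatives so that the factor carrying the most derivatives sits in $L^2$ and is absorbed into the dissipation, while the remaining factors are controlled by the $L^\infty$ and $L^r$ norms from Theorem~\ref{thm:weakglobal}. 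This yields a differential inequality $\fr{d}{dt}E_2 + D'(\text{dissipation}) \le g(t)E_2 + h(t)$ with $g,h\in L^1(0,T)$ --- here the integrated bound $\int_0^t \|\Delta\rho\|_{L^2}^2 + \|\Delta\sigma\|_{L^2}^2$ from \eqref{solest1}(iii) is the crucial integrable coefficient --- and Gr\"onwall delivers the claimed $L^\infty_t H^2 \cap L^2_t H^3$ estimate.

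The $H^3$ bound, the second line of \eqref{3dHsbdd1}, I would obtain by the same scheme one order higher: apply $\na\Delta$ to \eqref{rhot}--\eqref{sigmat0} and test with $\na\Delta\rho, \na\Delta\sigma$ (equivalently, test the $\Delta$-differentiated equations with $\Delta^2\rho, \Delta^2\sigma$), extracting the dissipation $D(\|\Delta^2\rho\|_{L^2}^2 + \|\Delta^2\sigma\|_{L^2}^2)$. The estimates are identical in spirit, now using the $H^2$ control just gained together with the $L^\infty$/$L^r$ bounds to dominate the higher commutators and the thrice-differentiated quadratic nonlinearities, with the time-integrated $H^3$ bound from the previous step supplying the integrable coefficient to close Gr\"onwall. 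Once $\rho,\sigma\in L^\infty_t H^3$, the velocity bound $\|u(t)\|_{H^3}\le C$ is immediate from $u = -\P(\rho\na\Phi)$: the product estimate in the algebra $H^3(\T^d)$ (valid since $3 > d/2$) and $\|\na\Phi\|_{H^3}\lesssim\|\rho\|_{H^2}$ give $\|u\|_{H^3}\lesssim\|\rho\|_{H^3}\|\rho\|_{H^2}$.

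Finally, existence would follow by running the two-tier approximation procedure as in Theorem~\ref{thm:weakglobal}: the uniform $H^3$ estimates pass to the approximants and survive the limit, yielding a strong solution in the stated spaces (which, being in particular a weak solution, also inherits \eqref{solest1}). Uniqueness I would prove by a difference estimate: for two solutions with identical data, the differences $\delta\rho, \delta\sigma, \delta u, \delta\Phi$ solve a linear system whose $L^2$ energy satisfies a closed Gr\"onwall inequality, since the coefficients (the solutions themselves, $\na\Phi$, and $u$) are bounded in Lipschitz-type norms via $H^3\hookrightarrow W^{1,\infty}$ for $d=2,3$. The main obstacle throughout is the absence of parabolic smoothing in the velocity: because Darcy's law makes $u=-\P(\rho\na\Phi)$ only as regular as $\rho$ itself, the top-order advection cannot be treated perturbatively and must be handled by the exact incompressibility cancellation, while the quadratic coupling terms $\na\sigma\cdot\na\Phi$ and $\sigma\Delta\Phi$, after high-order differentiation, produce products of two high-derivative factors that close only because one derivative can always be traded to the smoother potential $\Phi$, leaving just enough room in the interpolation to absorb the leading piece into the dissipation.
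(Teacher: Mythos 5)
Your proposal follows essentially the same route as the paper: a two-stage energy bootstrap (test the equations against $\Delta^2\rho,\Delta^2\sigma$ and then $-\Delta^3\rho,-\Delta^3\sigma$, use the incompressibility cancellation at top order, control the commutators and the differentiated coupling terms via H\"older and Gagliardo--Nirenberg using the $W^{1,r}$ bounds and decay from Theorem~\ref{thm:weakglobal}, and close with Gr\"onwall whose integrable coefficients come from \eqref{solest1}(iii)), followed by $\|u\|_{H^3}\lesssim\|\rho\nabla\Phi\|_{H^3}$ and an $L^2$ difference estimate for uniqueness. The approach and all key ingredients match the paper's proof.
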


The proof of Theorem~\ref{thm:weakglobal} is in Section~\ref{sec:weak} and the proof of Theorem~\ref{thm:strongglobal} is in Section~\ref{sec:strong}. In the sequel we omit the integration domain $\T^d$ and write $\int f = \int_{\T^d} f(x) \diff{x}$. We denote 
\[
\bar{f} = \frac{1}{|\T^d|} \int f,
\]
the average of a function $f$ over the torus $\T^d$. In inequalities, $C$ and $C'$ denote constants which may change from line to line.

\section{Global existence of weak solutions}
\label{sec:weak}

In this section, we present the a priori estimates in Theorem~\ref{thm:weakglobal}.  We split the proof of the a priori estimates into several lemmas.

The first lemma concerns with the zeroth energy balance of the system \eqref{rhot}--\eqref{incomp}.
\begin{lemma}
Let $d = 2, 3$ and $r \geq 2$. Let $c_1(0), c_2(0) \in W^{1, r}(\T^d)$ be nonnegative functions satisfying \eqref{neut}. Suppose $(\rho, \sigma, u)$ solves \eqref{rhot}--\eqref{incomp} with initial data \eqref{initial} on the interval $[0, T]$. Then for any $t \in [0, T]$, we have
\bas{nablaPhi-02}
\|\nabla \Phi(t)\|_{L^2}^2 + \frac{2}{\ve} \int_0^t \|u(\tau)\|_{L^2}^2 \diff{\tau} + \frac{2 D}{\ve} \int_0^t \|\rho(\tau)\|_{L^2}^2 \diff{\tau} \leq 2 \|\nabla \Phi(0)\|_{L^2}^2.
\eas
\end{lemma}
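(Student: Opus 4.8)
The plan is to derive an energy balance for the electrostatic energy $\|\nabla\Phi\|_{L^2}^2$, using the Poisson relation \eqref{poisson} to convert time derivatives of $\Phi$ into time derivatives of $\rho$. First I would note that neutrality \eqref{neut}, which is preserved in time, forces $\int\rho = 0$, so that \eqref{poisson} determines $\Phi$ as the unique mean-zero periodic potential and every integration by parts below is free of boundary terms. Differentiating \eqref{poisson} in time gives $-\ve\Delta\pt\Phi = \pt\rho$, and hence
\[
\frac{d}{dt}\|\nabla\Phi\|_{L^2}^2 = 2\int\nabla\Phi\cdot\nabla\pt\Phi = -2\int\Phi\,\Delta\pt\Phi = \frac{2}{\ve}\int\Phi\,\pt\rho,
\]
so the whole computation reduces to evaluating $\int\Phi\,\pt\rho$.

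Next I would substitute the $\rho$-equation \eqref{rhot} in its conservative form $\pt\rho = -\nabla\cdot(u\rho - D\nabla\rho - D\sigma\nabla\Phi)$, which follows from \eqref{rhot} together with \eqref{incomp}, and move the divergence onto $\nabla\Phi$:
\[
\int\Phi\,\pt\rho = \int\rho\, u\cdot\nabla\Phi - D\int\nabla\Phi\cdot\nabla\rho - D\int\sigma|\nabla\Phi|^2.
\]
I would treat the three terms in turn. For the transport term, Darcy's law \eqref{darcy} reads $\rho\nabla\Phi = -u-\nabla p$, so that $\int\rho\, u\cdot\nabla\Phi = \int u\cdot(\rho\nabla\Phi) = -\|u\|_{L^2}^2 - \int u\cdot\nabla p = -\|u\|_{L^2}^2$, the pressure integral vanishing after one integration by parts by \eqref{incomp}. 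For the diffusion term, integrating by parts once and inserting \eqref{poisson} in the form $\Delta\Phi = -\rho/\ve$ gives $-D\int\nabla\Phi\cdot\nabla\rho = D\int\rho\,\Delta\Phi = -\frac{D}{\ve}\|\rho\|_{L^2}^2$.

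The \emph{decisive} observation concerns the third term $-D\int\sigma|\nabla\Phi|^2$: since $\sigma = c_1 + c_2$ is the sum of the two nonnegative concentrations (nonnegativity being propagated from the initial data, as recalled in the introduction), one has $\sigma\geq 0$, so this term is $\leq 0$. Collecting everything and multiplying by $2/\ve$,
\[
\frac{d}{dt}\|\nabla\Phi\|_{L^2}^2 + \frac{2}{\ve}\|u\|_{L^2}^2 + \frac{2D}{\ve^2}\|\rho\|_{L^2}^2 = -\frac{2D}{\ve}\int\sigma|\nabla\Phi|^2 \leq 0.
\]
Integrating over $[0,t]$ and discarding the nonpositive right-hand side yields an energy inequality of the claimed form (in fact with the slightly stronger dissipation constant $2D/\ve^2$ on the charge term).

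I do not expect a genuine obstacle here: every manipulation is a torus integration by parts, and the two steps carrying the content are (i) the use of Darcy's law to identify the Lorentz-force work $\int\rho\, u\cdot\nabla\Phi$ with the negative velocity dissipation $-\|u\|_{L^2}^2$, and (ii) the sign $\sigma\geq 0$, which is precisely what turns the drift contribution into a dissipative term rather than an indefinite one. The only point requiring care is that for genuine weak solutions these identities must be carried out on the smooth approximating sequence and then passed to the limit; as an a priori estimate on smooth solutions the balance above is exact.
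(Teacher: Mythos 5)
Your proposal is correct and follows essentially the same route as the paper: both compute the time derivative of $\|\nabla \Phi\|_{L^2}^2$ by pairing $\pt \rho$ with $\Phi$ via the Poisson equation, use Darcy's law and incompressibility to identify $\int \rho\, u \cdot \nabla \Phi = -\|u\|_{L^2}^2$, convert the diffusion term into $-\frac{D}{\ve^2}\|\rho\|_{L^2}^2$, and discard $-\frac{D}{\ve}\int \sigma |\nabla \Phi|^2$ using $\sigma \geq 0$. The exact identity you arrive at, including the $D/\ve^2$ coefficient on the charge term, is the same one obtained in the paper's proof before it is relaxed to the stated inequality.
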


\begin{proof}
Using \eqref{rhot} and \eqref{poisson}, we have an evolutionary equation
\[
\pt (- \Delta \Phi) = - \frac{1}{\ve} u \cdot \nabla \rho + \frac{D}{\ve} (\Delta \rho + \nabla \sigma \cdot \nabla \Phi + \sigma \Delta \Phi).
\]
Testing this equation with $\Phi$ and integrating by parts, we find
\bax
\frac{1}{2} \frac{\diff}{\diff{t}} \|\nabla \Phi\|_{L^2}^2 & = - \frac{1}{\ve} \int u \cdot \nabla \rho \Phi + \frac{D}{\ve} \int \Delta \rho \Phi + \frac{D}{\ve} \int \nabla \cdot (\sigma \nabla \Phi) \Phi\\
& = \frac{1}{\ve} \int u \cdot (\rho \nabla \Phi) + \frac{D}{\ve} \int \rho \Delta \Phi - \frac{D}{\ve} \int \sigma |\nabla \Phi|^2.
\eax
By equations \eqref{darcy} and \eqref{poisson}, we obtain
\bax
\frac{1}{2} \frac{\diff}{\diff{t}} \|\nabla \Phi\|_{L^2}^2 + \frac{1}{\ve} \|u\|_{L^2}^2 + \frac{D}{\ve^2} \|\rho\|_{L^2}^2 + \frac{D}{\ve} \int \sigma |\nabla \Phi|^2 = 0.
\eax
Integrating in time and discarding the last term on the left hand side (notice that $\sigma = c_1 + c_2 \geq 0$) give the inequality \eqref{nablaPhi-02}.
\end{proof}

The following lemma gives some useful bounds for the fluid velocity field $u$.
\begin{lemma}
Let $d = 2, 3$. Let $(\rho, u)$ satisfy \eqref{darcy}--\eqref{incomp}. Then
\bal
& \|u\|_{L^p} \leq C \|\rho \nabla \Phi\|_{L^p} \leq C \|\rho\|_{L^p} \|\nabla \Phi\|_{L^\infty}, \label{uLp}\\
& \|\nabla u\|_{L^r} \leq C \|\nabla \rho\|_{L^r} \|\nabla \Phi\|_{L^\infty}, \label{nabuLr}
\eal
for all $p, r \in (1, \infty)$.
\end{lemma}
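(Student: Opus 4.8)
The plan is to solve Darcy's law for $u$ in terms of the Lorentz force $-\rho\nabla\Phi$ and then invoke the $L^p$ boundedness of the associated zero-order singular integral operators. Taking the divergence of \eqref{darcy} and using \eqref{incomp} gives $\Delta p = -\div(\rho\nabla\Phi)$, so that $p = -\Delta^{-1}\div(\rho\nabla\Phi)$ (with zero mean) and
\[
u = -\rho\nabla\Phi - \nabla p = \mathbb{P}(-\rho\nabla\Phi),
\]
where $\mathbb{P} = \mathbb{I} - \nabla\Delta^{-1}\div$ is the Leray--Helmholtz projection onto divergence-free fields. Since $\mathbb{P}$ is a matrix of Riesz-type zero-order Calder\'on--Zygmund operators, it is bounded on $L^p(\T^d)$ for every $p\in(1,\infty)$; this yields $\|u\|_{L^p}\le C\|\rho\nabla\Phi\|_{L^p}$, and H\"older's inequality $\|\rho\nabla\Phi\|_{L^p}\le\|\rho\|_{L^p}\|\nabla\Phi\|_{L^\infty}$ gives \eqref{uLp}.

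For \eqref{nabuLr} the naive route of differentiating the formula $u = \mathbb{P}(-\rho\nabla\Phi)$ produces the term $\mathbb{P}(\rho\nabla^2\Phi)$; since $\nabla^2\Phi$ is a double Riesz transform of $\rho$ by \eqref{poisson}, this is a product of two factors of the same differential order and is \emph{not} controlled by $\|\nabla\rho\|_{L^r}\|\nabla\Phi\|_{L^\infty}$. The key point is to pass to the vorticity formulation instead. Applying $\curl$ to \eqref{darcy} annihilates $\nabla p$ and the gradient contribution $\rho\,\curl\nabla\Phi = 0$, leaving
\[
\curl u = -\curl(\rho\nabla\Phi) = -\nabla\rho\times\nabla\Phi
\]
(in two dimensions the right-hand side is read as the scalar $-\nabla^\perp\rho\cdot\nabla\Phi$). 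Crucially, all second derivatives of $\Phi$ cancel identically, so that $\|\curl u\|_{L^r}\le\|\nabla\rho\|_{L^r}\|\nabla\Phi\|_{L^\infty}$.

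To close, I would recover $\nabla u$ from $\curl u$ and $\div u$. The field $u = \mathbb{P}(-\rho\nabla\Phi)$ is divergence free with zero mean on $\T^d$: indeed $\int\rho\nabla\Phi = \tfrac{\ve}{2}\int\nabla|\nabla\Phi|^2 = 0$ after integrating by parts with \eqref{poisson}. Hence the components of $\nabla u$ are expressed through $\curl u$ and $\div u = 0$ by Fourier multipliers that are smooth and homogeneous of degree zero away from the origin, that is, by Calder\'on--Zygmund (Biot--Savart type) operators, which are bounded on $L^r(\T^d)$ for $1<r<\infty$. This gives $\|\nabla u\|_{L^r}\le C\|\curl u\|_{L^r}$, and combining with the previous estimate yields \eqref{nabuLr}.

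The main obstacle is exactly the one flagged above: a direct differentiation of the velocity formula loses control because of the $\rho\nabla^2\Phi$ contribution (the best one obtains that way is a commutator bounded by $\|\rho\|_{\bmo}\|\rho\|_{L^r}$, which does not reduce to the stated right-hand side). The resolution is structural—working with $\curl u$, where the mixed second derivatives of $\Phi$ vanish, leaves only the genuinely lower-order product $\nabla\rho\times\nabla\Phi$, after which elliptic regularity for the divergence-free field $u$ finishes the proof.
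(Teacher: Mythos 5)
Your proposal is correct and follows essentially the same route as the paper: the Leray projection of Darcy's law gives \eqref{uLp}, and the curl of \eqref{darcy} kills all second derivatives of $\Phi$, after which the Calder\'on--Zygmund bound $\|\nabla u\|_{L^r}\le C\|\curl u\|_{L^r}$ for divergence-free fields yields \eqref{nabuLr}. The extra details you supply (why direct differentiation of $\mathbb{P}(-\rho\nabla\Phi)$ fails, and the zero-mean check) are sound but not needed beyond what the paper records.
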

\begin{proof}
Applying the Leray projector to \eqref{darcy} and noting that the Leray projector is bounded on $L^p$ when $p \in (1, \infty)$ (see, e.g., \cite{CF88}), we obtain \eqref{uLp}. As for \eqref{nabuLr}, we take the curl of \eqref{darcy} and have
\bax
& \nabla^\perp \cdot u = - \nabla^\perp \rho \cdot \nabla \Phi \quad \text{if}\ d = 2,\\
& \curl u = - \nabla \rho \times \nabla \Phi \quad \text{if}\ d = 3.
\eax
Then the proof is completed by invoking the well-known estimate
\[
\|\nabla u\|_{L^r} \leq C \|\curl u\|_{L^r}
\]
for all $r \in (1, \infty)$.
\end{proof}

A key step of proving global a priori bounds for the weak solutions is to obtain \eqref{solest1}(ii) and thus \eqref{solest1}(i). In either two or three dimensions, in view of \eqref{poisson}, elliptic estimates, and Sobolev embeddings, we have
\bal
& \|\nabla \Phi\|_{L^6(\T^d)} \leq C \|\rho\|_{L^2(\T^d)}, \label{Phiemb1}\\
& \|\nabla \Phi\|_{L^\infty(\T^d)} \leq C \|\rho\|_{L^4(\T^d)}, \label{Phiemb3}
\eal
for $d = 2, 3$.

The following lemma states the pointwise exponential decay of $\|\rho\|_{L^p}$ and $\|\nabla \Phi\|_{L^\infty}$.
\begin{lemma}
\label{lem:rhodecay}
Let $d = 2, 3$ and $r \geq 2$. Let $c_1(0), c_2(0) \in W^{1, r}(\T^d)$ be nonnegative functions satisfying \eqref{neut}. Suppose $(\rho, \sigma, u)$ solves \eqref{rhot}--\eqref{incomp} with initial data \eqref{initial} on the interval $[0, T]$. Then for any $t \in [0, T]$, we have
\bal
& \|\rho(t)\|_{L^p} \leq C_p e^{- C' t}, \quad \quad \forall p\ge 2, \label{rholpdecay}\\
& \|\nabla \Phi(t)\|_{L^\infty} \leq C e^{- C' t}, \label{phidecay2d}
\eal
for some constants $C_p, C' > 0$, with $C'$ independent of $p$.
\end{lemma}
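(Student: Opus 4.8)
The plan is to run an $L^p$ energy estimate on \eqref{rhot} and to extract from the coupling to $\sigma$ a coercive term of the right order. Multiplying \eqref{rhot} by $|\rho|^{p-2}\rho$ and integrating over $\T^d$, the transport term drops since $u$ is divergence free by \eqref{incomp}, the Laplacian yields the nonnegative dissipation $D(p-1)\int|\rho|^{p-2}|\nabla\rho|^2$, and the identity $-\ve\Delta\Phi=\rho$ from \eqref{poisson} turns $D\int\sigma\Delta\Phi\,|\rho|^{p-2}\rho$ into $-\frac{D}{\ve}\int\sigma|\rho|^p$. This gives
\bax
\frac1p\frac{\diff}{\diff t}\|\rho\|_{L^p}^p + D(p-1)\int|\rho|^{p-2}|\nabla\rho|^2 + \frac{D}{\ve}\int\sigma|\rho|^p = D\int(\nabla\sigma\cdot\nabla\Phi)\,|\rho|^{p-2}\rho.
\eax
The whole point is the third term on the left: since $\sigma=c_1+c_2\ge0$ and its average $\bar\sigma=\frac{1}{|\T^d|}\int(c_1(0)+c_2(0))>0$ is conserved in time (integrate \eqref{sigmat0} and note $\nabla\rho\cdot\nabla\Phi+\rho\Delta\Phi=\nabla\cdot(\rho\nabla\Phi)$ has zero mean), writing $\sigma=\bar\sigma+(\sigma-\bar\sigma)$ produces the genuinely coercive contribution $\frac{D\bar\sigma}{\ve}\|\rho\|_{L^p}^p$. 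Its rate $D\bar\sigma/\ve$ carries no factor of $p$, which is exactly what makes the exponent $C'$ in \eqref{rholpdecay} bounded below independently of $p$; by contrast, relying on the diffusive term and the Poincaré inequality applied to $|\rho|^{p/2}$ would only furnish the $p$-degrading rate of order $1/p$.

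Before closing I would record two elementary a priori facts. First, the base case $p=2$ is clean and self-contained: testing \eqref{rhot} with $\rho$ and \eqref{sigmat0} with $\sigma-\bar\sigma$ and adding, the potential terms organize into $-\frac{D}{\ve}\int\sigma\rho^2\le0$, and since $\rho$ and $\sigma-\bar\sigma$ have zero average (neutrality \eqref{neut} and conservation of $\bar\sigma$), the Poincaré inequality gives $\frac{\diff}{\diff t}(\|\rho\|_{L^2}^2+\|\sigma-\bar\sigma\|_{L^2}^2)\le-2D\lambda_1(\|\rho\|_{L^2}^2+\|\sigma-\bar\sigma\|_{L^2}^2)$, where $\lambda_1>0$ is the spectral gap of $-\Delta$ on $\T^d$, so both $\|\rho(t)\|_{L^2}$ and $\|\sigma(t)-\bar\sigma\|_{L^2}$ decay like $e^{-D\lambda_1 t}$. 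Second, testing the $c_i$ equations with $c_i^{p-1}$ and using $(c_1^p-c_2^p)(c_1-c_2)\ge0$ (monotonicity of $t\mapsto t^p$ on $[0,\infty)$) shows $\|c_i(t)\|_{L^p}\le\|c_i(0)\|_{L^p}$ for every $p$; letting $p\to\infty$ yields uniform-in-time bounds on $\|\rho\|_{L^\infty}$ and $\|\sigma\|_{L^\infty}$.

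The main obstacle is the cross term $D\int(\nabla\sigma\cdot\nabla\Phi)\,|\rho|^{p-2}\rho$ on the right: unlike the diffusive term it couples $\rho$ to $\nabla\sigma$, and a full integration by parts trades $\nabla\sigma$ for $\nabla\rho$ only at the cost of cancelling the coercive term, so it cannot simply be absorbed into the $\rho$-dissipation. My plan is to bound it by $D\|\nabla\Phi\|_{L^\infty}\|\nabla\sigma\|_{L^2}\|\rho\|_{L^\infty}^{p-1}|\T^d|^{1/2}$ and control each factor: $\|\nabla\Phi\|_{L^\infty}\le C\|\rho\|_{L^4}$ by \eqref{Phiemb3} decays exponentially once the case $p=4$ of \eqref{rholpdecay} is in hand, $\|\rho\|_{L^\infty}^{p-1}$ is bounded by the previous paragraph, and $\nabla\sigma$ is controlled in $L^2$ uniformly in time at the $H^1$ level; likewise the fluctuation term $\frac{D}{\ve}\int(\sigma-\bar\sigma)|\rho|^p$ is absorbed using the decay of $\sigma-\bar\sigma$. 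This turns the identity into $\frac{\diff}{\diff t}\|\rho\|_{L^p}^p\le-\frac{pD\bar\sigma}{2\ve}\|\rho\|_{L^p}^p+g(t)$ with $g$ exponentially small, and Grönwall's inequality yields \eqref{rholpdecay} with $C'$ of order $D\bar\sigma/\ve$ and a $p$-dependent prefactor $C_p$ absorbing $\|\rho(0)\|_{L^p}$ and the initial transient. Finally \eqref{phidecay2d} is immediate from \eqref{Phiemb3} at $p=4$: $\|\nabla\Phi(t)\|_{L^\infty}\le C\|\rho(t)\|_{L^4}\le Ce^{-C't}$.
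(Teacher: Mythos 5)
Your overall skeleton is the paper's: the decay rate that is uniform in $p$ comes from the coercive term $\frac{D\bar\sigma}{\ve}\|\rho\|_{L^p}^p$ extracted from $D\sigma\Delta\Phi$ via \eqref{poisson}, the base case $p=2$ is closed by pairing \eqref{rhot} with \eqref{sigmat0} and using the sign of $\int\sigma\rho^2$, and \eqref{phidecay2d} follows from \eqref{Phiemb3}. However, your treatment of the remaining terms has genuine gaps. First, your uniform-in-time $L^\infty$ bound on $\rho$ and $\sigma$ (obtained by letting $p\to\infty$ in $\|c_i(t)\|_{L^p}\le C\|c_i(0)\|_{L^p}$) requires $c_i(0)\in L^\infty$, which is \emph{not} implied by the hypothesis $c_i(0)\in W^{1,r}(\T^d)$ when $r\le d$ (e.g.\ $r=2$, $d=3$ gives only $L^6$). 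Since the factor $\|\rho\|_{L^\infty}^{p-1}$ is the backbone of your bound on the cross term $D\int(\nabla\sigma\cdot\nabla\Phi)|\rho|^{p-2}\rho$, and presumably also of your absorption of $\frac{D}{\ve}\int(\sigma-\bar\sigma)|\rho|^p$, the argument does not close under the stated hypotheses. Second, there are two circularities in the ordering of the estimates: (i) you invoke the exponential decay of $\|\nabla\Phi\|_{L^\infty}\le C\|\rho\|_{L^4}$ to control the cross term, but that decay is only available \emph{after} the case $p=4$ of \eqref{rholpdecay} is proved, and the case $p=4$ itself needs the cross-term bound; (ii) you use a uniform-in-time bound on $\|\nabla\sigma\|_{L^2}$, but at this stage only the time-integrated bound \eqref{l2time} is known --- the pointwise $H^1$ bound is \eqref{solest1}(iii), which in the paper is derived \emph{from} the conclusions of this lemma.

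The paper's proof avoids all three issues by a different algebraic arrangement: it writes $\nabla\sigma\cdot\nabla\Phi+\sigma\Delta\Phi=\nabla\cdot(\sigma\nabla\Phi)$ and integrates by parts against $|\rho|^{p-2}\rho$, so that after splitting $\sigma=\bar\sigma+(\sigma-\bar\sigma)$ the only troublesome term is $-D\int|\rho|^{p-2}(\sigma-\bar\sigma)\nabla\rho\cdot\nabla\Phi$ (see \eqref{dtrhoLp}), which carries $\nabla\rho$ and can therefore be absorbed into the dissipation $D\int|\rho|^{p-2}|\nabla\rho|^2$ by Young's inequality; the leftover factors are estimated using only $\|\nabla\Phi\|_{L^6}\le C\|\rho\|_{L^2}$ (available from \eqref{Phiemb1} and the $p=2$ decay \eqref{l2decay}), Gagliardo--Nirenberg, and the coupled $L^p$ estimate for $\sigma-\bar\sigma$, never $\|\nabla\Phi\|_{L^\infty}$, $\|\rho\|_{L^\infty}$, or a pointwise $H^1$ bound. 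If you want to salvage your route, you would need to replace the $L^\infty$ ingredients by an interpolation closing on finite $L^p$ norms and reorder the bootstrap accordingly; as written, the proof does not go through.
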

\begin{proof}
We first observe that \eqref{sigmat0} is equivalent to
\begin{align}
\label{sigmat}
\pt (\sigma - \bar{\sigma}) = - u \cdot \nabla (\sigma - \bar{\sigma}) + D \left(\Delta (\sigma - \bar{\sigma}) + \nabla \rho \cdot \nabla \Phi + \rho \Delta \Phi\right).
\end{align}
Note that the average $\bar{\sigma} \geq 0$ since $c_1, c_2 \geq 0$ and $\bar{\sigma}$ is conserved in time due to \eqref{sigmat0}.
Let $p \ge 2$. We multiply \eqref{rhot} by $\frac{1}{p - 1} \rho |\rho|^{p - 2}$ and \eqref{sigmat} by $\frac{1}{p - 1} (\sigma - \bar{\sigma}) |\sigma - \bar{\sigma}|^{p - 2}$, and then integrate by parts,
\begin{align}
\begin{split}
& \frac{1}{p (p - 1)}\frac{\diff}{\diff{t}} \|\rho\|_{L^p}^p = - D \int |\rho|^{p - 2} |\nabla \rho|^2 - D \int |\rho|^{p - 2} (\sigma - \bar{\sigma}) \nabla \rho \cdot \nabla \Phi - D \int |\rho|^{p - 2} \bar{\sigma} \nabla \rho \cdot \nabla \Phi,
\end{split} \label{rholpinq}\\
& \frac{1}{p (p - 1)} \frac{\diff}{\diff{t}} \|\sigma - \bar{\sigma}\|_{L^p}^p = - D \int |\sigma - \bar{\sigma}|^{p - 2} |\nabla (\sigma - \bar{\sigma})|^2 - D \int |\sigma - \bar{\sigma}|^{p - 2} \rho \nabla(\sigma - \bar{\sigma}) \cdot \nabla \Phi. \label{sigmalpinq}
\end{align}
Taking $p = 2$, summing \eqref{rholpinq} and \eqref{sigmalpinq}, and using \eqref{poisson}, we obtain
\bax
\frac{1}{2} \frac{\diff}{\diff{t}} \left(\|\rho\|_{L^2}^2 + \|\sigma - \bar{\sigma}\|_{L^2}^2\right) + D \left(\|\nabla \rho\|_{L^2}^2 + \|\nabla (\sigma - \bar{\sigma})\|_{L^2}^2\right) + \frac{D}{\ve} \int \sigma \rho^2 = 0.
\eax
Recall that the ionic concentrations $c_1, c_2 \geq 0$, so that $\sigma = c_1 + c_2 \geq 0$, and thus, the last term on the left hand side is nonnegative. Furthermore, since $|\rho| = |c_1 - c_2| \leq c_1 + c_2 = \sigma$, we have that
\begin{align}
\label{L2estin}
\frac{1}{2} \frac{\diff}{\diff{t}} \left(\|\rho\|_{L^2}^2 + \|\sigma - \bar{\sigma}\|_{L^2}^2\right) + D \left(\|\nabla \rho\|_{L^2}^2 + \|\nabla (\sigma - \bar{\sigma})\|_{L^2}^2\right) + \frac{D}{\ve} \|\rho\|_{L^3}^3 \leq 0.
\end{align}
By the Poincar\'{e} inequality and Gr\"{o}nwall's inequality, we deduce the following exponential pointwise decay
\begin{align}
\label{l2decay}
\|\rho(t)\|_{L^2}^2 + \|\sigma(t) - \bar{\sigma}\|_{L^2}^2 \leq \left(\|\rho(0)\|_{L^2}^2 + \|\sigma(0) - \bar{\sigma}\|_{L^2}^2\right) e^{- C' t},
\end{align}
and the bounds
\begin{align}
\label{l2time}
2 D \int_0^t \|\nabla \rho(\tau)\|_{L^2}^2 + \|\nabla \sigma(\tau)\|_{L^2}^2 \diff{\tau} + \frac{2 D}{\ve} \int_0^t \|\rho(\tau)\|_{L^3}^3 \diff{\tau} \leq \|\rho(0)\|_{L^2}^2 + \|\sigma(0) - \bar{\sigma}\|_{L^2}^2.
\end{align}

We obtain from \eqref{Phiemb1} and \eqref{l2decay} that
\bas{phidecay}
\|\nabla \Phi(t)\|_{L^6} \leq C \|\rho(t)\|_{L^2} \leq C e^{- C' t}.
\eas

For $p \geq 4$, we have from \eqref{rhot}--\eqref{poisson} that
\bal
& \frac{1}{p (p - 1)} \frac{\diff}{\diff{t}} \|\rho\|_{L^p}^p = - D \int |\rho|^{p - 2} |\nabla \rho|^2 - \frac{D \bar{\sigma}}{(p - 1) \ve} \int |\rho|^p - D \int |\rho|^{p - 2} (\sigma - \bar{\sigma}) \nabla \rho \cdot \nabla \Phi, \label{dtrhoLp}\\
& \frac{1}{p (p - 1)} \frac{\diff}{\diff{t}} \|\sigma - \bar{\sigma}\|_{L^p}^p = - D \int |\sigma - \bar{\sigma}|^{p - 2} |\nabla \sigma|^2 - D \int |\sigma - \bar{\sigma}|^{p - 2} \rho \nabla \sigma \cdot \nabla \Phi. \label{dtsigmaLp}
\eal

Notice that by H\"{o}lder's inequality, the Gagliardo--Nirenberg interpolation inequality
\[
\|f\|_{L^\frac{3 (p - 2)}{p - 3}} \leq C \|\nabla f\|_{L^2}^{\frac{d p}{6 (p - 2)}} \|f\|_{L^2}^{1 - \frac{d p}{6 (p - 2)}} + \|f\|_{L^2},
\]
and Young's inequality, we have
\bas{dtrhoLpest1}
& D \int |\rho|^{p - 2} (\sigma - \bar{\sigma}) \nabla \rho \cdot \nabla \Phi\\
& \quad \leq \frac{2 D}{p} \|\nabla \Phi\|_{L^6} \|\nabla |\rho|^{\frac{p}{2}}\|_{L^2} \||\rho|^{\frac{p - 2}{2}} (\sigma - \bar{\sigma})\|_{L^3}\\
& \quad \leq \frac{2 D}{p} \|\nabla \Phi\|_{L^6} \|\nabla |\rho|^{\frac{p}{2}}\|_{L^2} \||\rho|^{\frac{p}{2}}\|_{L^{\frac{3 (p - 2)}{p - 3}}}^{\frac{p - 2}{p}} \|\sigma - \bar{\sigma}\|_{L^p}\\
& \quad \leq C \|\nabla \Phi\|_{L^6} \|\nabla |\rho|^{\frac{p}{2}}\|_{L^2}^{1 + \frac{d}{6}} \|\rho\|_{L^p}^{\frac{(6 - d) p}{12} - 1} \|\sigma - \bar{\sigma}\|_{L^p} + C \|\nabla \Phi\|_{L^6} \|\nabla |\rho|^{\frac{p}{2}}\|_{L^2} \|\rho\|_{L^p}^{\frac{p - 2}{2}} \|\sigma - \bar{\sigma}\|_{L^p}\\
& \quad \leq \frac{D}{2} \int |\rho|^{p - 2} |\nabla \rho|^2 + C \|\nabla \Phi\|_{L^6}^{\frac{12}{6 - d}} \|\rho\|_{L^p}^{p - \frac{12}{6 - d}} \|\sigma - \bar{\sigma}\|_{L^p}^{\frac{12}{6 - d}} + C \|\nabla \Phi\|_{L^6}^2 \|\rho\|_{L^p}^{p - 2} \|\sigma - \bar{\sigma}\|_{L^p}^2.
\eas
Similarly, we have
\bas{dtsigmaLpest1}
& D \int |\sigma - \bar{\sigma}|^{p - 2} \rho \nabla \sigma \cdot \nabla \Phi\\
& \quad \leq \frac{D}{2} \int |\sigma - \bar{\sigma}|^{p - 2} |\nabla \sigma|^2 + C \|\nabla \Phi\|_{L^6}^{\frac{12}{6 - d}} \|\sigma - \bar{\sigma}\|_{L^p}^{p - \frac{12}{6 - d}} \|\rho\|_{L^p}^{\frac{12}{6 - d}} + C \|\nabla \Phi\|_{L^6}^2 \|\sigma - \bar{\sigma}\|_{L^p}^{p - 2} \|\rho\|_{L^p}^2.
\eas
Using \eqref{dtrhoLpest1}--\eqref{dtsigmaLpest1} into \eqref{dtrhoLp}--\eqref{dtsigmaLp} yields
\bax
\frac{\diff}{\diff{t}} \left(\|\rho\|_{L^p} + \|\sigma - \bar{\sigma}\|_{L^p}\right) + \frac{D \bar{\sigma}}{\ve} \|\rho\|_{L^p} \leq C \left(\|\nabla \Phi\|_{L^6}^2 + \|\nabla \Phi\|_{L^6}^{\frac{12}{6 - d}}\right) \left(\|\rho\|_{L^p} + \|\sigma - \bar{\sigma}\|_{L^p}\right).
\eax
Dropping the dissipation term and applying Gr\"{o}nwall's inequality and \eqref{phidecay} give
\bas{rhosimgaLpconstbdd}
\|\rho(t)\|_{L^p} + \|\sigma(t) - \bar{\sigma}\|_{L^p} \leq \left(\|\rho(0)\|_{L^p} + \|\sigma(0) - \bar{\sigma}\|_{L^p}\right) e^{C \int_0^t \|\nabla \Phi(\tau)\|_{L^6}^2 + \|\nabla \Phi(\tau)\|_{L^6}^{\frac{12}{6 - d}} \diff{\tau}} \leq C,
\eas
where $C > 0$ is a constant depending only on $p$, the initial data, and the parameters of the problem.

Now we use \eqref{dtrhoLpest1} again in \eqref{dtrhoLp}, and update with the new estimate \eqref{rhosimgaLpconstbdd} to derive that
\bax
\frac{1}{p (p - 1)} \frac{\diff}{\diff{t}} \|\rho\|_{L^p}^p + \frac{D \bar{\sigma}}{(p - 1) \ve} \|\rho\|_{L^p}^p & \leq C \|\nabla \Phi\|_{L^6}^{\frac{12}{6 - d}} \|\rho\|_{L^p}^{p - \frac{12}{6 - d}} \|\sigma - \bar{\sigma}\|_{L^p}^{\frac{12}{6 - d}} + C \|\nabla \Phi\|_{L^6}^2 \|\rho\|_{L^p}^{p - 2} \|\sigma - \bar{\sigma}\|_{L^p}^2\\
& \leq C \left(\|\nabla \Phi\|_{L^6}^2 + \|\nabla \Phi\|_{L^6}^{\frac{12}{6 - d}}\right).
\eax
Integrating in time and applying \eqref{phidecay} then yield
\bax
\|\rho(t)\|_{L^p}^p \leq e^{- \frac{D \bar{\sigma}p}{\ve} t} \bigg(\|\rho(0)\|_{L^p}^p + C \int_0^t \|\nabla \Phi(\tau)\|_{L^6}^2 + \|\nabla \Phi(\tau)\|_{L^6}^{\frac{12}{6 - d}} \diff{\tau}\bigg) \leq C e^{- \frac{D \bar{\sigma}p}{\ve} t},
\eax
which leads to \eqref{rholpdecay}. Finally, in view of \eqref{Phiemb3} and \eqref{rholpdecay}, we conclude \eqref{phidecay2d}.
\end{proof}

The following lemma establishes the pointwise decay of $\|\sigma - \bar{\sigma}\|_{L^p}$, whose proof is based on a Moser's type iteration argument as in \cite{BFS14, CL95, CIL20pa, IS21p}.
\begin{lemma}
Let $d = 2, 3$ and $r \geq 2$. Let $c_1(0), c_2(0) \in W^{1, r}(\T^d)$ be nonnegative functions satisfying \eqref{neut}. Suppose $(\rho, \sigma, u)$ solves \eqref{rhot}--\eqref{incomp} with initial data \eqref{initial} on the interval $[0, T]$. Then for any $t \in [0, T]$, we have
\bas{sigmalpdecay}
& \|\sigma(t) - \bar{\sigma}\|_{L^p} \leq C_p e^{- C' t}, \quad \quad \forall p\ge 2,
\eas
for some constants $C_p, C' > 0$ with $C'$ independent of $p$.
\end{lemma}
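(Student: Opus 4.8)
The plan is to run a Moser iteration on equation \eqref{sigmat} for $w := \sigma - \bar\sigma$, using as forcing the decay of $\rho$ and $\nabla\Phi$ already established in Lemma~\ref{lem:rhodecay}. I would start from the identity \eqref{sigmalpinq} (whose transport term has already dropped by incompressibility \eqref{incomp}), bound the forcing integral by $D\|\nabla\Phi\|_{L^\infty}\int|\rho|\,|w|^{p-2}|\nabla w|$, and use Young's inequality to absorb half of the weighted dissipation. With the identity $\int|w|^{p-2}|\nabla w|^2 = \tfrac{4}{p^2}\|\nabla|w|^{p/2}\|_{L^2}^2$ and the Hölder bound $\int\rho^2|w|^{p-2}\le\|\rho\|_{L^p}^2\|w\|_{L^p}^{p-2}$, this yields
\[
\frac{1}{p(p-1)}\frac{\diff}{\diff{t}}\|w\|_{L^p}^p + \frac{2D}{p^2}\big\|\nabla|w|^{p/2}\big\|_{L^2}^2 \le \frac{D}{2}\|\nabla\Phi\|_{L^\infty}^2\|\rho\|_{L^p}^2\|w\|_{L^p}^{p-2}.
\]
Since $\|\nabla\Phi(t)\|_{L^\infty}\le Ce^{-C't}$ by \eqref{phidecay2d} and $\|\rho(t)\|_{L^p}\le C_pe^{-C't}$ by \eqref{rholpdecay}, the right-hand side is an exponentially decaying forcing whose rate is governed by the same $C'$.

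Next I would iterate along $p_k = 2^k$, setting $v_k = |w|^{p_k/2}$ so that $\|v_k\|_{L^2}^2 = \|w\|_{L^{p_k}}^{p_k}$, and convert the dissipation $\|\nabla v_k\|_{L^2}^2$ into a gain of integrability via the Gagliardo--Nirenberg--Sobolev inequality, controlling $\|w\|_{L^{\chi p_k}}$ with $\chi = \tfrac{d}{d-2}$ for $d=3$ (any $\chi>1$ for $d=2$). This produces a recursion estimating $\|w\|_{L^{p_{k+1}}}$ in terms of $\|w\|_{L^{p_k}}$ and the forcing above, whose base case $k=1$ is the $L^2$ decay \eqref{l2decay}, $\|w(t)\|_{L^2}\le Ce^{-C't}$. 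Carrying an exponential weight $e^{C't}$ through the recursion and using that both the forcing and the base norm decay at the common rate $C'$, I would pass to the limit $k\to\infty$ to obtain $\|w(t)\|_{L^\infty}\le Ce^{-C't}$, and then deduce \eqref{sigmalpdecay} for every finite $p\ge2$ from $\|w\|_{L^p}\le|\T^d|^{1/p}\|w\|_{L^\infty}$.

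The main obstacle is securing a rate $C'$ that does not deteriorate as $p\to\infty$. In contrast to $\rho$, whose equation \eqref{dtrhoLp} carries the linear damping $-\tfrac{D\bar\sigma}{(p-1)\ve}\int|\rho|^p$ and thereby enjoys the manifestly $p$-independent rate $\tfrac{D\bar\sigma}{\ve}$, the equation for $w$ has \emph{no} zeroth-order damping, so its decay must come entirely from the diffusion and the forcing. A single energy inequality, closed with the Poincaré inequality applied to $|w|^{p/2}$, supplies only a weighted spectral gap of order $C'/p$, which degrades in the limit. The resolution is that the full iteration encodes the hypercontractive smoothing of the heat part: since the forcing and the base $L^2$ norm share the rate $C'$, the recursion propagates that common rate to all $L^{p_k}$ norms without loss, so $C'$ persists in the limit. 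The delicate bookkeeping is to check that the constants $C_{p_k}$ generated at each step stay finite while the rate remains bounded below by $C'$; this is the heart of the argument.
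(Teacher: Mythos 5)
Your opening differential inequality is correct and coincides with the paper's \eqref{sigmaest0}, and your diagnosis of the difficulty---that Poincar\'e applied to $|w|^{p/2}$ only yields a damping rate of order $1/p$ for $\|w\|_{L^p}$---is exactly the right one. But the step where you resolve that difficulty is asserted rather than proved, and it is the entire content of the lemma. The paper's resolution goes in the opposite direction from yours: instead of climbing to $L^\infty$ by Sobolev embedding, it applies the Gagliardo--Nirenberg inequality $\|v\|_{L^2}^2\le\delta\|\nabla v\|_{L^2}^2+C_\delta\|v\|_{L^1}^2$ to $v=|w|^{p/2}$ with the $p$-\emph{dependent} choice $\delta=\tfrac{1}{2p}$. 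This converts the dissipation into a zeroth-order damping $pD\|w\|_{L^p}^p$, i.e.\ a rate $D$ for $\|w\|_{L^p}$ that is manifestly independent of $p$, at the cost of a lower-order term $C_p\|w\|_{L^{p/2}}^p$ (see \eqref{normdiffe}); the $p$-dependence is thus pushed entirely into the constants $C_p$, and the new term is handled by a downward induction on $p=2^{j+1}$ whose base case is the $L^2$ decay \eqref{l2decay}, with general $p$ recovered by interpolation.

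Your upward iteration could in principle be made to work (for instance by proving a forced parabolic smoothing estimate on unit time intervals, $\|w\|_{L^\infty(\T^d\times[n+1,n+2])}\le C\|w\|_{L^2(\T^d\times[n,n+2])}+\dotsb$, and then feeding in the $L^2$ decay), and it would give the stronger conclusion of $L^\infty$ decay. But as written, the claim that ``the recursion propagates that common rate to all $L^{p_k}$ norms without loss'' is precisely what must be demonstrated, and two concrete issues are left unaddressed. First, your forcing bound carries $\|\rho\|_{L^{p_k}}$ along the iteration, and the constants $C_{p_k}$ in \eqref{rholpdecay} are not shown to be bounded as $p_k\to\infty$; you should instead estimate the forcing with a single fixed large exponent on $\rho$ and put the excess integrability on $w$ via Sobolev. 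Second, and more importantly, you must exhibit the mechanism by which the exponential rate survives infinitely many iterations with controllable constants; the paper's choice $\delta=1/(2p)$ is exactly such a mechanism, and your proposal contains none.
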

\begin{proof}
From \eqref{sigmalpinq}, we have
\begin{align*}
\frac{1}{p} \frac{\diff}{\diff{t}} \left\||\sigma - \bar{\sigma}|^{\frac{p}{2}}\right\|_{L^2}^2 + D (p - 1) \int |\sigma - \bar{\sigma}|^{p - 2} |\nabla (\sigma - \bar{\sigma})|^2 = - D (p - 1) \int |\sigma - \bar{\sigma}|^{p - 2} \rho \nabla (\sigma - \bar{\sigma}) \cdot \nabla \Phi.
\end{align*}
We use the bounds
\[
D (p - 1) \int |\sigma - \bar{\sigma}|^{p - 2} |\nabla (\sigma - \bar{\sigma})|^2 = \frac{4 D (p - 1)}{p^2} \int \left|\nabla |\sigma - \bar{\sigma}|^{\frac{p}{2}}\right|^2 \geq \frac{2 D}{p} \int \left|\nabla |\sigma - \bar{\sigma}|^{\frac{p}{2}}\right|^2
\]
and
\[
D  (p - 1) \int |\sigma - \bar{\sigma}|^{p - 2} \rho \nabla (\sigma - \bar{\sigma}) \cdot \nabla \Phi \leq 2 D \|\rho\|_{L^p} \|\nabla \Phi\|_{L^\infty} \left\||\sigma - \bar{\sigma}|^{\frac{p}{2}}\right\|_{L^2}^{\frac{p - 2}{p}} \left\|\nabla |\sigma - \bar{\sigma}|^{\frac{p}{2}}\right\|_{L^2}
\]
to deduce 
\[
\frac{\diff}{\diff{t}} \left\||\sigma - \bar{\sigma}|^{\frac{p}{2}}\right\|_{L^2}^2 + 2 D \left\|\nabla |\sigma - \bar{\sigma}|^{\frac{p}{2}}\right\|_{L^2}^2 \leq 2 D p \|\rho\|_{L^p} \|\nabla \Phi\|_{L^\infty} \left\||\sigma - \bar{\sigma}|^{\frac{p}{2}}\right\|_{L^2}^{\frac{p - 2}{p}} \left\|\nabla |\sigma - \bar{\sigma}|^{\frac{p}{2}}\right\|_{L^2}.
\]
By Young's inequality, we have
\begin{align}
\label{sigmaest0}
\frac{\diff}{\diff{t}} \left\||\sigma - \bar{\sigma}|^{\frac{p}{2}}\right\|_{L^2}^2 + D \left\|\nabla |\sigma - \bar{\sigma}|^{\frac{p}{2}}\right\|_{L^2}^2 \leq D p^2 \|\rho\|_{L^p}^2 \|\nabla \Phi\|_{L^\infty}^2 \left\||\sigma - \bar{\sigma}|^{\frac{p}{2}}\right\|_{L^2}^{2 - \frac{4}{p}}.
\end{align}
The Gagliardo-Nirenberg interpolation inequality and Young's inequality imply that
\begin{align}
\left\||\sigma - \bar{\sigma}|^{\frac{p}{2}}\right\|_{L^2}^2 & \leq M \left\|\nabla |\sigma - \bar{\sigma}|^{\frac{p}{2}}\right\|_{L^2}^{\frac{2 d}{2 + d}} \left\||\sigma - \bar{\sigma}|^{\frac{p}{2}}\right\|_{L^1}^{\frac{4}{2 + d}} + M \left\||\sigma - \bar{\sigma}|^{\frac{p}{2}}\right\|_{L^1}^2\notag\\
& \leq \delta \left\|\nabla |\sigma - \bar{\sigma}|^{\frac{p}{2}}\right\|_{L^2}^2 + \frac{2 d^{\frac{d}{2}}}{(2 + d)^{1 + \frac{d}{2}}} \frac{M^{\frac{2 + d}{2}} +  M \delta^{\frac{d}{2}}}{\delta^{\frac{d}{2}}} \left\||\sigma - \bar{\sigma}|^{\frac{p}{2}}\right\|_{L^1}^2,
\label{secondtermest}
\end{align}
where $M > 0$ is the constant from the interpolation inequality and $\delta$ is a number to be chosen later.

Multiplying \eqref{secondtermest} by $\frac{D}{\delta}$, we get 
\begin{align}
\label{secondtermest2}
D \left\|\nabla |\sigma - \bar{\sigma}|^{\frac{p}{2}}\right\|_{L^2}^2  \geq \frac{D}{\delta} \left\||\sigma - \bar{\sigma}|^{\frac{p}{2}}\right\|_{L^2}^2 - \frac{2 D d^{\frac{d}{2}}}{(2 + d)^{1 + \frac{d}{2}}} \frac{M^{\frac{2 + d}{2}} +  M \delta^{\frac{d}{2}}}{\delta^{1 + \frac{d}{2}}} \left\||\sigma - \bar{\sigma}|^{\frac{p}{2}}\right\|_{L^1}^2.
\end{align}
Thus, using \eqref{secondtermest2} in \eqref{sigmaest0} yields
\begin{align}
\label{beforeabsorb}
& \frac{\diff}{\diff{t}} \left\||\sigma - \bar{\sigma}|^{\frac{p}{2}}\right\|_{L^2}^2 + \frac{D}{\delta} \left\||\sigma - \bar{\sigma}|^{\frac{p}{2}}\right\|_{L^2}^2\notag\\
&\leq D p^2 \|\rho\|_{L^p}^2 \|\nabla \Phi\|_{L^\infty}^2 \left\||\sigma - \bar{\sigma}|^{\frac{p}{2}}\right\|_{L^2}^{2 - \frac{4}{p}} + \frac{2 D d^{\frac{d}{2}}}{(2 + d)^{1 + \frac{d}{2}}} \frac{M^{\frac{2 + d}{2}} +  M \delta^{\frac{d}{2}}}{\delta^{1 + \frac{d}{2}}} \left\||\sigma - \bar{\sigma}|^{\frac{p}{2}}\right\|_{L^1}^2.
\end{align}

%\notag\\
%&\leq D p^2 \|\rho\|_{L^p}^2 \|\nabla \Phi\|_{L^\infty}^2 + D p^2 \|\rho\|_{L^p}^2 \|\nabla \Phi\|_{L^\infty}^2 \left\||\sigma - \bar{\sigma}|^{\frac{p}{2}}\right\|_{L^2}^2\notag\\
%& \qquad + \frac{2 D d^{\frac{d}{2}}}{(2 + d)^{1 + \frac{d}{2}}} \frac{M^{\frac{2 + d}{2}} + M \delta^{\frac{d}{2}}}{\delta^{1 + \frac{d}{2}}} \left\||\sigma - \bar{\sigma}|^{\frac{p}{2}}\right\|_{L^1}^2,
%\end{align}
%where the last inequality follows from Young's inequality because $2 - \frac{4}{p} < 2$. From \eqref{rholpdecay} and \eqref{phidecay2d}, there is a constant, where, for definiteness, we denote as $C_0 > 0$, depending only on the parameters of the problem and the initial data such that
%\bas{absorb}
%& D p^2 \|\rho\|_{L^p}^2 \|\nabla \Phi\|_{L^\infty}^2 \left\||\sigma - \bar{\sigma}|^{\frac{p}{2}}\right\|_{L^2}^2 \leq C_0 D p^2 \left\||\sigma - \bar{\sigma}|^{\frac{p}{2}}\right\|_{L^2}^2.
%\eas
%Thus, by choosing 
%\[
%\delta = \frac{1}{1 + C_0 p^2},
%\]
%the term \eqref{absorb} can be absorbed by the second term in the first line of \eqref{beforeabsorb}, which results in
Now we choose $\delta =\fr{1}{2p}$ and use a Young inequality with exponents
$\fr{p}{p-2}$, $\fr{p}{2}$ to deduce
\be
\frac{\diff}{\diff{t}} \left\|\sigma - \bar{\sigma}\right\|_{L^p}^p + pD \left\|\sigma - \bar{\sigma}\right\|_{L^p}^p \leq C_p\left ( \|\rho\|_{L^p}^2 \|\nabla \Phi\|_{L^\infty}^2\right)^{\fr{p}{2}} + C_p\left\|\sigma - \bar{\sigma}\right\|_{L^\frac{p}{2}}^p.
\la{normdiffe}
\ee
Applying Gr\"{o}nwall's inequality then leads to
\begin{align}
\left\|\sigma(t) - \bar{\sigma}\right\|_{L^p}^p
 &\leq e^{- pD t} \bigg[\left\|\sigma(0) - \bar{\sigma}\right\|_{L^p}^p 
 + C_p\int_0^t e^{pD \tau} \|\rho(\tau)\|_{L^p}^p \|\nabla \Phi(\tau)\|_{L^\infty}^p \diff{\tau}\notag\\
 &\quad\quad\quad\quad
 + C_p\int_0^t e^{pD \tau} \left\|\sigma(\tau) - \bar{\sigma}\right\|_{L^{\frac{p}{2}}}^p \diff{\tau}\bigg].
\label{sigmagronwall}
\end{align}
From \eqref{rholpdecay}--\eqref{phidecay2d} it follows that
\begin{align*}
&  \int_0^t e^{-pD (t-\tau)} \|\rho(\tau)\|_{L^p}^p \|\nabla \Phi(\tau)\|_{L^\infty}^p \diff{\tau}
\leq C e^{- pC' t}.
\end{align*}
holds with $C'$ bounded below independently of $p$. 
We estimate the last integral in \eqref{sigmagronwall} by induction. We first recall that $\|\sigma(t) - \bar{\sigma}\|_{L^2}$ decays exponentially in time (see \eqref{l2decay}).

We take $p = 2^{j + 1}$ for $j \in \N$ and assume by induction that
\be
\left\|\sigma(t) - \bar{\sigma}\right\|_{L^p}^p \le C e^{-pc_p t}
\la{induction}
\ee
with $c_p >\epsilon>0$ bounded below independently of $p$. We take, without loss of generality $\epsilon \le \fr{D}{2}$. We deduce from \eqref{sigmagronwall} that
\be
\left\|\sigma(t) - \bar{\sigma}\right\|_{L^p}^p \le Ce^{-pC't} + C te^{-pkt}
\la{indstep}
\ee
with $k = \min\{ D; 2 c_{\fr{p}{2}}\} \ge \min\{D; 2\epsilon\} = 2\epsilon$. Thus $c_p$ is bounded below by $\epsilon>0$ which is uniform in $p\to\infty$.

%deduce that $\|\sigma(t) - \bar{\sigma}\|_{L^{2^{j + 1}}}$ decays exponentially from the inductive assumption that $\|\sigma(t) - \bar{\sigma}\|_{L^{2^j}}$ decays exponentially. In fact, for each fixed $j \in \N$, $p = 2^{j + 1}$,
%\[
%\frac{2 D d^{\frac{d}{2}}}{(2 + d)^{1 + \frac{d}{2}}} \frac{M^{\frac{2 + d}{2}}% +  M \delta^{\frac{d}{2}}}{\delta^{1 + \frac{d}{2}}}
%\]
%is some finite positive constant that depends on $p$. Thus, since the integrand of the following integral grows slower than $e^{D p \tau}$, we have that
%\[
%e^{- \frac{D}{p} t} \bigg(\int_0^t e^{D \tau} \|\sigma(\tau) - \bar{\sigma}\|_{L^\frac{p}{2}}^p \diff{\tau}\bigg)^{\frac{1}{p}} \leq C e^{- C' t},
%\]
%for some $C, C' > 0$.

Therefore, we deduced from \eqref{sigmagronwall} that $\|\sigma(t) - \bar{\sigma}\|_{L^p}$ decays exponentially for each fixed $p \geq 2$ of the form $p = 2^j$ ($j \in \N$) at a rate bounded below uniformly as $p\to\infty$. Then by interpolation, we obtain that $\|\sigma(t) - \bar{\sigma}\|_{L^p}$ decays exponentially for all $p \geq 2$, at a rate bounded below uniformly as $p\to\infty$, which is \eqref{sigmalpdecay}.
\end{proof}

The next lemma concerns with the $W^{1, r}$ norms of the solutions, which finishes the a priori estimates in Theorem~\ref{thm:weakglobal}.
\begin{lemma}
\label{lem:W1r}
Let $d = 2, 3$ and $r \geq 2$. Let $c_1(0), c_2(0) \in W^{1, r}(\T^d)$ be nonnegative functions satisfying \eqref{neut}. Suppose $(\rho, \sigma, u)$ solves \eqref{rhot}--\eqref{incomp} with initial data \eqref{initial} on the interval $[0, T]$. Then for any $t \in [0, T]$, we have
\bal
& \|\nabla \rho(t)\|_{L^2}^2 + \|\nabla \sigma(t)\|_{L^2}^2 + \int_0^t \|\Delta \rho(\tau)\|_{L^2}^2 + \|\Delta \sigma(\tau)\|_{L^2}^2 \diff{\tau} \leq C,\\
& \left\|\nabla \rho(t)\right\|_{L^r} + \left\|\nabla \sigma(t)\right\|_{L^r} \leq C e^{C t}, \label{nabionLr}\\
& \|\nabla u(t)\|_{L^r} \leq C e^{C'' t}, \label{nabuLrbdd}
\eal
for some constants $C > 0$ and $C'' \in \R$.
\end{lemma}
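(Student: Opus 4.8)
The plan is to establish the three displayed bounds in the order written, each from an energy identity for \eqref{rhot} and \eqref{sigmat} (equivalently \eqref{sigmat0}) together with \eqref{poisson}, \eqref{darcy}, \eqref{incomp}, feeding in the decay already proved in \eqref{rholpdecay}, \eqref{phidecay2d}, \eqref{sigmalpdecay}. First I would prove the $H^1$--$L^2H^2$ bound by testing \eqref{rhot} with $-\Delta\rho$ and \eqref{sigmat} with $-\Delta\sigma$ and summing. After integration by parts, using $\nabla\cdot u=0$ and $\Delta\Phi=-\rho/\ve$, the right-hand side consists of the advection terms $\int u\cdot\nabla\rho\,\Delta\rho$, $\int u\cdot\nabla\sigma\,\Delta\sigma$, the electromigration terms $\int(\nabla\sigma\cdot\nabla\Phi)\Delta\rho$ and $\int(\nabla\rho\cdot\nabla\Phi)\Delta\sigma$, a favorable damping $-\tfrac{D\bar\sigma}{\ve}\|\nabla\rho\|_{L^2}^2$ coming from the $\bar\sigma$--part of $\int\sigma\rho\,\Delta\rho$, and lower-order quadratic terms. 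The goal is to absorb every top-order contribution into the dissipation $D(\|\Delta\rho\|_{L^2}^2+\|\Delta\sigma\|_{L^2}^2)$ via Gagliardo--Nirenberg and Young, leaving only time-integrable coefficients. The one delicate term is advection: rather than \eqref{nabuLr} I would measure $u$ through \eqref{uLp}, $\|u\|_{L^6}\le C\|\rho\|_{L^6}\|\nabla\Phi\|_{L^\infty}$, and bound $\int u\cdot\nabla\rho\,\Delta\rho\le\|u\|_{L^6}\|\nabla\rho\|_{L^3}\|\Delta\rho\|_{L^2}$, interpolating $\|\nabla\rho\|_{L^3}\lesssim\|\nabla\rho\|_{L^2}^{1-d/6}\|\Delta\rho\|_{L^2}^{d/6}$. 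Young then produces $\tfrac{D}{4}\|\Delta\rho\|_{L^2}^2+C(\|\rho\|_{L^6}\|\nabla\Phi\|_{L^\infty})^{2/(1-d/6)}\|\nabla\rho\|_{L^2}^2$, whose coefficient decays exponentially by \eqref{rholpdecay}, \eqref{phidecay2d} and is thus integrable; the electromigration and quadratic terms are treated the same way using \eqref{Phiemb3} and \eqref{sigmalpdecay}. A Gr\"onwall inequality with integrable coefficient and forcing, together with the Poincar\'e inequality, gives the stated uniform-in-time bound.

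For \eqref{nabionLr} I would test the gradient of \eqref{rhot} with $|\nabla\rho|^{r-2}\nabla\rho$, and likewise for $\sigma$. The diffusion yields the dissipation $\int|\nabla\rho|^{r-2}|\nabla^2\rho|^2\gtrsim\|\nabla|\nabla\rho|^{r/2}\|_{L^2}^2$, and the term $-\tfrac{D}{\ve}\int\sigma|\nabla\rho|^r\le0$ supplies extra damping. The main obstacle is again advection, $\int(\nabla u\cdot\nabla\rho)\cdot|\nabla\rho|^{r-2}\nabla\rho\le\int|\nabla u||\nabla\rho|^r$: since \eqref{nabuLr} controls $\nabla u$ only by $\nabla\rho$ at the \emph{same} differential order, a naive estimate yields a genuinely superlinear term in $\|\nabla\rho\|_{L^r}$ and threatens finite-time blow-up for large data. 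The resolution is to invoke the uniform $H^1$ bound just proved: by \eqref{nabuLr} with $r=2$, $\|\nabla u\|_{L^2}\le C\|\nabla\rho\|_{L^2}\|\nabla\Phi\|_{L^\infty}\le C\|\nabla\Phi\|_{L^\infty}$, a bound \emph{independent} of $\|\nabla\rho\|_{L^r}$. Estimating $\int|\nabla u||\nabla\rho|^r\le\|\nabla u\|_{L^2}\|\nabla\rho\|_{L^{2r}}^r$ and interpolating $\|\nabla\rho\|_{L^{2r}}^r=\||\nabla\rho|^{r/2}\|_{L^4}^2\lesssim\|\nabla|\nabla\rho|^{r/2}\|_{L^2}^{d/2}\|\nabla\rho\|_{L^r}^{r(1-d/4)}$, Young absorbs the dissipation and leaves $C\|\nabla\Phi\|_{L^\infty}^{4/(4-d)}\|\nabla\rho\|_{L^r}^r$, which is \emph{linear} in $\|\nabla\rho\|_{L^r}^r$ with bounded coefficient. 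The electromigration terms are linear in the same sense: the factor $\nabla^2\Phi$ is controlled by the Calder\'on--Zygmund bound $\|\nabla^2\Phi\|_{L^r}\le C\|\rho\|_{L^r}$ and $\nabla\Phi$ by \eqref{phidecay2d}, while the top-order factors $\nabla^2\rho,\nabla^2\sigma$ are paired against the two dissipations by Cauchy--Schwarz and Young. A linear Gr\"onwall inequality then yields \eqref{nabionLr}.

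Finally, \eqref{nabuLrbdd} is immediate from \eqref{nabuLr}, \eqref{nabionLr}, and \eqref{phidecay2d}: $\|\nabla u\|_{L^r}\le C\|\nabla\rho\|_{L^r}\|\nabla\Phi\|_{L^\infty}\le Ce^{Ct}e^{-C't}=Ce^{C''t}$ with $C''=C-C'\in\R$. The crux of the whole lemma is the interplay between the two steps: the superlinear advection in the $W^{1,r}$ estimate is linearized precisely by the uniform $H^1$ control of $\nabla u$ furnished by the first estimate, and that first estimate closes uniformly only because measuring $u$ in $L^6$ turns the borderline advective term into one whose coefficient decays exponentially and is therefore time-integrable.
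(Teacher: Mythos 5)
Your proposal is correct and follows essentially the same route as the paper: the $H^1$ estimate closes by measuring $u$ in $L^6$ via \eqref{uLp} so that the advective term can be absorbed into the dissipation with a time-integrable coefficient, and the $W^{1,r}$ estimate is linearized exactly as in \eqref{3dYest1-1} by bounding $\|\nabla u\|_{L^2}$ through the just-established uniform $H^1$ control. The only cosmetic differences are that the paper closes the first step using the time-integral bound \eqref{l2time} rather than a Gr\"onwall argument with decaying coefficient, and handles the electromigration terms by integrating by parts so that only $\Delta \Phi = -\rho/\ve$ appears, rather than invoking the Calder\'on--Zygmund bound for $\nabla^2 \Phi$.
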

\begin{proof}
Testing \eqref{rhot} and \eqref{sigmat0} with $- \Delta \rho$ and $- \Delta \sigma$ respectively, summing the resulting equations, and using \eqref{poisson}, we obtain
\bas{3dnablaL2}
& \frac{1}{2} \frac{\diff}{\diff{t}} \left(\|\nabla \rho\|_{L^2}^2 + \|\nabla \sigma\|_{L^2}^2\right) + D \left(\|\Delta \rho\|_{L^2}^2 + \|\Delta \sigma\|_{L^2}^2\right) + \frac{D}{\ve} \int \sigma |\nabla \rho|^2\\
& \quad = \int u \cdot \nabla \rho \Delta \rho + \int u \cdot \nabla \sigma \Delta \sigma - D \int \Delta \rho (\nabla \sigma \cdot \nabla \Phi) - D \int \Delta \sigma (\nabla \rho \cdot \nabla \Phi)\\
& \qquad \qquad - \frac{D}{\ve} \int \rho \nabla \rho \cdot \nabla \sigma - \frac{2 D}{\ve} \int \rho |\nabla \rho|^2.
\eas
Using H\"{o}lder's inequality, the advection terms in \eqref{3dnablaL2} can be estimated as
\bas{3dnablaL2-u}
\int u \cdot \nabla \rho \Delta \rho + \int u \cdot \nabla \sigma \Delta \sigma \leq \|u\|_{L^6} \left(\|\nabla \rho\|_{L^3} \|\Delta \rho\|_{L^2} + \|\nabla \sigma\|_{L^3} \|\Delta \sigma\|_{L^2}\right).
\eas
We use the Gagliardo-Nirenberg inequality
\[
\|\nabla f\|_{L^3} \leq C \|\Delta f\|_{L^2}^{\frac{d}{6}} \|\nabla f\|_{L^2}^{1 - \frac{d}{6}} + C \|\nabla f\|_{L^2}
\]
the estimate \eqref{uLp}, and Young's inequality in \eqref{3dnablaL2-u} to obtain
\bax
& \int u \cdot \nabla \rho \Delta \rho + \int u \cdot \nabla \sigma \Delta \sigma\\
& \quad \leq C \|u\|_{L^6} \left(\|\nabla \rho\|_{L^2}^{1 - \frac{d}{6}} \|\Delta \rho\|_{L^2}^{1 + \frac{d}{6}} + \|\nabla \sigma\|_{L^2}^{1 - \frac{d}{6}} \|\Delta \sigma\|_{L^2}^{1 + \frac{d}{6}}\right) + C \|u\|_{L^6} \left(\|\nabla \rho\|_{L^2} \|\Delta \rho\|_{L^2} + \|\nabla \sigma\|_{L^2} \|\Delta \sigma\|_{L^2}\right)\\
& \quad \leq \frac{D}{4} \left(\|\Delta \rho\|_{L^2}^2 + \|\Delta \sigma\|_{L^2}^2\right) + C \left(\|\rho\|_{L^6}^2 \|\nabla \Phi\|_{L^\infty}^2 + \|\rho\|_{L^6}^{\frac{12}{6 - d}} \|\nabla \Phi\|_{L^\infty}^{\frac{12}{6 - d}}\right) \left(\|\nabla \rho\|_{L^2}^2 + \|\nabla \sigma\|_{L^2}^2\right).
\eax

The other terms in \eqref{3dnablaL2} can be estimated using H\"{o}lder's inequality, Young's inequality, and Ladyzhenskaya's inequalities in two or three dimensions. The resulting estimates for \eqref{3dnablaL2} is
\bax
& \frac{\diff}{\diff{t}} \left(\|\nabla \rho\|_{L^2}^2 + \|\nabla \sigma\|_{L^2}^2\right) + 2 D \left(\|\Delta \rho\|_{L^2}^2 + \|\Delta \sigma\|_{L^2}^2\right) + \frac{2 D}{\ve} \int \sigma |\nabla \rho|^2\\
& \quad \leq \frac{D}{2} \left(\|\Delta \rho\|_{L^2}^2 + \|\Delta \sigma\|_{L^2}^2\right) + C \left(\|\rho\|_{L^6}^2 \|\nabla \Phi\|_{L^\infty}^2 + \|\rho\|_{L^6}^{\frac{12}{6 - d}} \|\nabla \Phi\|_{L^\infty}^{\frac{12}{6 - d}}\right) \left(\|\nabla \rho\|_{L^2}^2 + \|\nabla \sigma\|_{L^2}^2\right)\\
& \qquad + C \|\nabla \Phi\|_{L^\infty}^2 \left(\|\nabla \rho\|_{L^2}^2 + \|\nabla \sigma\|_{L^2}^2\right) + C \|\rho\|_{L^2} \left(\|\nabla \rho\|_{L^4}^2 + \|\nabla \sigma\|_{L^4}^2\right)\\
& \quad \leq D \left(\|\Delta \rho\|_{L^2}^2 + \|\Delta \sigma\|_{L^2}^2\right)\\
& \qquad + C \left(\|\nabla \Phi\|_{L^\infty}^2 + \|\rho\|_{L^6}^2 \|\nabla \Phi\|_{L^\infty}^2 + \|\rho\|_{L^6}^{\frac{12}{6 - d}} \|\nabla \Phi\|_{L^\infty}^{\frac{12}{6 - d}} + \|\rho\|_{L^2} + \|\rho\|_{L^2}^{\frac{4}{4 - d}}\right) \left(\|\nabla \rho\|_{L^2}^2 + \|\nabla \sigma\|_{L^2}^2\right).
\eax
Integrating this inequality in time, we obtain
\bax
& \|\nabla \rho(t)\|_{L^2}^2 + \|\nabla \sigma(t)\|_{L^2}^2 + D \int_0^t \|\Delta \rho(\tau)\|_{L^2}^2 + \|\Delta \sigma(\tau)\|_{L^2}^2 \diff{\tau}\\
& \quad \leq \|\nabla \rho(0)\|_{L^2}^2 + \|\nabla \sigma(0)\|_{L^2}^2\\
& \qquad + C \sup_{\tau \in [0, t]} \left(\|\nabla \Phi(\tau)\|_{L^\infty}^2 + \|\rho(\tau)\|_{L^6}^2 \|\nabla \Phi(\tau)\|_{L^\infty}^2 + \|\rho(\tau)\|_{L^6}^{\frac{12}{6 - d}} \|\nabla \Phi(\tau)\|_{L^\infty}^{\frac{12}{6 - d}} + \|\rho(\tau)\|_{L^2} + \|\rho(\tau)\|_{L^2}^{\frac{4}{4 - d}}\right)\\
& \qquad \cdot \int_0^t \|\nabla \rho(\tau)\|_{L^2}^2 + \|\nabla \sigma(\tau)\|_{L^2}^2 \diff{\tau}\\
& \quad \leq C,
\eax
where the last line follows from \eqref{rholpdecay}--\eqref{phidecay2d} and \eqref{l2time}.

For $r > 2$, we differentiate the two equations in \eqref{rhot}--\eqref{sigmat0}, and then take the scalar product with $\nabla \rho |\nabla \rho|^{r - 2}$ and $\nabla \sigma |\nabla \sigma|^{r - 2}$ respectively, integrate over $\T^d$, and integrate by parts to obtain
\bas{3dDrhoLp}
\frac{1}{r} \frac{\diff}{\diff{t}} \|\nabla \rho\|_{L^r}^r & = - \int |\nabla \rho|^{r - 2} \nabla \rho \cdot (\nabla u)^* \nabla \rho - D \int |\nabla \rho|^{r - 2} |\nabla \nabla \rho|^2 - \frac{4 D (r - 2)}{r^2} \int \left|\nabla |\nabla \rho|^{\frac{r}{2}}\right|^2\\
& \qquad - D \int |\nabla \rho|^{r - 2} \Delta \rho \nabla \sigma \cdot \nabla \Phi - D \int |\nabla \rho|^{r - 2} \Delta \rho \sigma \Delta \Phi - D \int \nabla \rho \cdot \nabla |\nabla \rho|^{r - 2} \sigma \Delta \Phi\\
& \qquad  - D (r - 2) \int \nabla \rho \cdot (\nabla \nabla \rho) \cdot \nabla \rho |\nabla \rho|^{r - 4} \nabla \sigma \cdot \nabla \Phi
\eas
and
\bas{3dDsigmaLp}
\frac{1}{r} \frac{\diff}{\diff{t}} \|\nabla \sigma\|_{L^r}^r & = - \int |\nabla \sigma|^{r - 2} \nabla \sigma \cdot (\nabla u)^* \nabla \sigma - D \int |\nabla \sigma|^{r - 2} |\nabla \nabla \sigma|^2 - \frac{4 D (r - 2)}{r^2} \int \left|\nabla |\nabla \sigma|^{\frac{r}{2}}\right|^2\\
& \qquad - D \int |\nabla \sigma|^{r - 2} \Delta \sigma \nabla \rho \cdot \nabla \Phi - D \int |\nabla \sigma|^{r - 2} \Delta \sigma \rho \Delta \Phi - D \int \nabla \sigma \cdot \nabla |\nabla \sigma|^{r - 2} \rho \Delta \Phi\\
& \qquad  - D (r - 2) \int \nabla \sigma \cdot (\nabla \nabla \sigma) \cdot \nabla \sigma |\nabla \sigma|^{r - 4} \nabla \rho \cdot \nabla \Phi.
\eas
For simplicity, we denote
\[
Y = \|\nabla \rho\|_{L^r}^r + \|\nabla \sigma\|_{L^r}^r = \|R\|_{L^2}^2 + \|S\|_{L^2}^2, \quad R = |\nabla \rho|^{\frac{r}{2}}, \quad S = |\nabla \sigma|^{\frac{r}{2}}.
\]
Adding \eqref{3dDrhoLp} to \eqref{3dDsigmaLp} and using \eqref{poisson}, H\"{o}lder's inequality, and Young's inequality, we obtain
\bax
 \frac{\diff}{\diff{t}} Y + \Dissip_1 \leq ~& r \int |\nabla u| (R^2 + S^2) + \frac{D r}{2} \int |\nabla \rho|^{r - 2} |\nabla \nabla \rho|^2 + \frac{D r}{2} \int |\nabla \sigma|^{r - 2} |\nabla \nabla \sigma|^2\\
& + 2 D r ((r - 2)^2 + 1) \|\nabla \Phi\|_{L^\infty}^2 \left(\|\nabla \rho\|_{L^r}^{r - 2} \|\nabla \sigma\|_{L^r}^2 + \|\nabla \sigma\|_{L^r}^{r - 2} \|\nabla \rho\|_{L^r}^2\right)\\
& + \frac{2 D r}{\ve^2} ((r - 2)^2 + 1) \int \rho^2 \left(|\nabla \rho|^{r - 2} \sigma^2 + |\nabla \sigma|^{r - 2} \rho^2\right),
\eax
where $\Dissip_1$ is the dissipation term
\bax
\Dissip_1 = D r \int |\nabla \rho|^{r - 2} |\nabla \nabla \rho|^2  + D r \int |\nabla \sigma|^{r - 2} |\nabla \nabla \sigma|^2 + \frac{4 D (r - 2)}{r} \left(\|\nabla R\|_{L^2}^2 + \|\nabla S\|_{L^2}^2\right).
\eax
Therefore, we have 
\bas{3dYest1}
\frac{\diff}{\diff{t}} Y + \Dissip_2 & \leq r \bigg(\int |\nabla u| (R^2 + S^2)\bigg) + 2 D r ((r - 2)^2 + 1) \|\nabla \Phi\|_{L^\infty}^2 Y\\
& \quad + \frac{2 D r}{\ve^2} ((r - 2)^2 + 1) \int \rho^2 (\rho^2 + \sigma^2) \left(R^{\frac{2 r - 4}{r}} + S^{\frac{2 r - 4}{r}}\right),
\eas
where
\bax
\Dissip_2 & = \frac{D r}{2} \int |\nabla \rho|^{r - 2} |\nabla \nabla \rho|^2  + \frac{D r}{2} \int |\nabla \sigma|^{r - 2} |\nabla \nabla \sigma|^2 + \frac{4 D (r - 2)}{r} \left(\|\nabla R\|_{L^2}^2 + \|\nabla S\|_{L^2}^2\right)\\
& \geq \frac{4 D (r - 2)}{r} \left(\|R\|_{H^1}^2 - \|R\|_{L^2}^2 + \|S\|_{H^1}^2 - \|S\|_{L^2}^2\right).
\eax
We first note that from Ladyzhenskaya's inequality, Young's inequality, and \eqref{nabuLr}
\bas{3dYest1-1}
\int |\nabla u| (R^2 + S^2) & \leq \|\nabla u\|_{L^2} \left(\|R\|_{L^4}^2 + \|S\|_{L^4}^2\right)\\
& \leq \|\nabla u\|_{L^2} \left(\|R\|_{L^2}^{2 - \frac{d}{2}} \|\nabla R\|_{L^2}^{\frac{d}{2}} + \|R\|_{L^2}^2 + \|S\|_{L^2}^{2 - \frac{d}{2}} \|\nabla S\|_{L^2}^{\frac{d}{2}} + \|S\|_{L^2}^2\right)\\
& \leq \frac{D (r - 2)}{r^2} \left(\|R\|_{H^1}^2 + \|S\|_{H^1}^2\right) + C \|\nabla \Phi\|_{L^\infty}^{\frac{4}{4 - d}} \|\nabla \rho\|_{L^2}^{\frac{4}{4 - d}} \left(\|R\|_{L^2}^2 + \|S\|_{L^2}^2\right).
\eas
By H\"{o}lder's inequality with exponents $\fr{r}{2}$ and $\fr{2}{2-\fr{4}{r}}$, we have
\bas{3dYest1-2}
& \frac{2 D r}{\ve^2} ((r - 2)^2 + 1) \int \rho^2 (\rho^2 + \sigma^2) \left(R^{\frac{2 r - 4}{r}} + S^{\frac{2 r - 4}{r}}\right)\\
&\quad\leq C \Big(\|R\|_{L^2}^{2} + \|S\|_{L^2}^{2}\Big)^{\frac{r - 2}{r}} \Big(\|\rho\|_{L^{2r}} + \|\sigma\|_{L^{2r}}\Big)^{4}\\
&\quad\leq \|R\|_{L^2}^2 + \|S\|_{L^2}^2 + C \Big(\|\rho\|_{L^{2r}} + \|\sigma\|_{L^{2r}}\Big)^{2r}.
\eas
Using the inequalities \eqref{3dYest1-1}--\eqref{3dYest1-2} in \eqref{3dYest1}, we get
\bax
\frac{\diff}{\diff{t}} Y \leq C \left(1 + \|\nabla \Phi\|_{L^\infty}^2   + \|\nabla \Phi\|_{L^\infty}^{\frac{4}{4 - d}} \|\nabla \rho\|_{L^2}^{\frac{4}{4 - d}}\right) Y + \Big(\|\rho\|_{L^{2r}} + \|\sigma\|_{L^{2r}}\Big)^{2r}.
\eax
By the bounds \eqref{rholpdecay}--\eqref{phidecay2d},\eqref{sigmalpdecay}, and Gr\"{o}nwall's inequality, we then deduce that $Y(t)$ has at most exponential growth in time $t > 0$,
\bax
&Y(t) =  \|\nabla \rho(t)\|_{L^r}^r + \|\nabla \sigma(t)\|_{L^r}^r\\
&\leq \exp\bigg(C \int_0^t 1 + \|\nabla \Phi(\tau)\|_{L^\infty}^2 + \|\nabla \Phi(\tau)\|_{L^\infty}^{\frac{4}{4 - d}} \|\nabla \rho(\tau)\|_{L^2}^{\frac{4}{4 - d}}\diff{\tau}\bigg)\\
& \qquad \cdot \left[\|\nabla \rho(0)\|_{L^r}^r + \|\nabla \sigma(0)\|_{L^r}^r + \int_0^t\Big(\|\rho(\tau)\|_{L^{2r}} + \|\sigma(\tau)\|_{L^{2r}}\Big)^{2r}\diff{\tau}\right ]\\
&\leq C e^{C' t},
\eax
where the constants $C, C' > 0$ depend on $r$, the parameters of the problem, and the initial data.

We finally use the bounds \eqref{phidecay2d} and \eqref{nabionLr} to obtain that
\begin{align*}
\|\nabla u(t)\|_{L^r} \leq \|\nabla \rho(t)\|_{L^r} \|\nabla \Phi(t)\|_{L^\infty} \leq C e^{C'' t}
\end{align*}
for some constant $C'' \in \R$ depending on $r$, the parameters of the problem, and the initial data.
\end{proof}

\section{Strong solutions of the Nernst-Planck-Darcy system}
\label{sec:strong}

In this section, we present the a priori estimates of Theorem~\ref{thm:strongglobal}. We first note that the embedding $H^3(\T^d) \hookrightarrow W^{1, p}(\T^d)$ (for $p \geq 1$) and Theorem~\ref{thm:weakglobal} imply the global existence of weak solutions together with the bounds \eqref{solest1}--\eqref{solest2} in the interval $[0, T]$ for any $T > 0$. We only need to show the propagation of $H^3$-regularity and the uniqueness of the solutions.  The construction of solutions is similar to the construction of weak solutions.

To prove the estimates in \eqref{3dHsbdd1}, we first prove the lemma 
\begin{lemma}
Let $d = 2, 3$ and $r \geq 2$. Let $c_1(0), c_2(0) \in H^3(\T^d)$ be nonnegative functions satisfying \eqref{neut}. Suppose $(\rho, \sigma, u)$ solves \eqref{rhot}--\eqref{incomp} with initial data \eqref{initial} on the interval $[0, T]$. Then for any $t \in [0, T]$, we have
\bas{deltal2bdd}
&  \|\Delta \rho(t)\|_{L^2} + \|\Delta \sigma(t)\|_{L^2} + \int_0^t \|\nabla \Delta \rho(\tau)\|_{L^2}^2 + \|\nabla \Delta \sigma(\tau)\|_{L^2}^2 \leq C,
\eas
for some constants $C > 0$.
\end{lemma}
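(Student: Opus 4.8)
The plan is to carry out an $H^2$ energy estimate in the spirit of the $H^1$ estimate \eqref{3dnablaL2} from Lemma~\ref{lem:W1r}, but one derivative higher. I would apply $\Delta$ to \eqref{rhot} and \eqref{sigmat0}, test the results against $\Delta\rho$ and $\Delta\sigma$ respectively, sum, and integrate by parts. Using \eqref{poisson} to substitute $\Delta\Phi=-\ve^{-1}\rho$, the biharmonic terms $D\int\Delta^2\rho\,\Delta\rho=-D\|\nabla\Delta\rho\|_{L^2}^2$ produce the dissipation, so that
\bax
\frac12\frac{\diff}{\diff t}\left(\|\Delta\rho\|_{L^2}^2+\|\Delta\sigma\|_{L^2}^2\right)+D\left(\|\nabla\Delta\rho\|_{L^2}^2+\|\nabla\Delta\sigma\|_{L^2}^2\right)=\mathcal{A}+\mathcal{E},
\eax
where $\mathcal{A}$ collects the advection contributions $-\int\Delta(u\cdot\nabla\rho)\Delta\rho-\int\Delta(u\cdot\nabla\sigma)\Delta\sigma$ and $\mathcal{E}$ collects the electromigration contributions from $D\Delta(\nabla\sigma\cdot\nabla\Phi+\sigma\Delta\Phi)$ and $D\Delta(\nabla\rho\cdot\nabla\Phi+\rho\Delta\Phi)$. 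The goal is to bound $\mathcal{A}+\mathcal{E}$ by $\frac{D}{2}(\|\nabla\Delta\rho\|_{L^2}^2+\|\nabla\Delta\sigma\|_{L^2}^2)+A(t)(\|\Delta\rho\|_{L^2}^2+\|\Delta\sigma\|_{L^2}^2)+B(t)$ with $A,B\in L^1(0,T)$, after which \eqref{deltal2bdd} follows from Gr\"{o}nwall's inequality.

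For the advection term I would expand $\Delta(u\cdot\nabla\rho)=u\cdot\nabla\Delta\rho+2\nabla u:\nabla^2\rho+\Delta u\cdot\nabla\rho$ and note that $\int u\cdot\nabla\Delta\rho\,\Delta\rho=\frac12\int u\cdot\nabla|\Delta\rho|^2=0$ by \eqref{incomp}. The remaining pieces need a second-order bound on $u$. Differentiating once more the curl identity used in the proof of \eqref{nabuLr} and invoking the Calder\'{o}n--Zygmund bound $\|\nabla^2\Phi\|_{L^p}\le C\|\rho\|_{L^p}$ coming from \eqref{poisson}, one obtains
\bax
\|\nabla^2 u\|_{L^2}\le C\|\nabla\Phi\|_{L^\infty}\|\Delta\rho\|_{L^2}+C\|\nabla\rho\|_{L^4}\|\rho\|_{L^4}.
\eax
Then $\mathcal{A}$ is controlled by H\"{o}lder's inequality together with the Ladyzhenskaya/Gagliardo--Nirenberg interpolation of $\|\nabla\rho\|_{L^4}$ and $\|\Delta\rho\|_{L^4}$ between $\|\Delta\rho\|_{L^2}$ and $\|\nabla\Delta\rho\|_{L^2}$, followed by Young's inequality, absorbing the genuinely top-order factor $\|\nabla\Delta\rho\|_{L^2}$ into the dissipation.

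For $\mathcal{E}$, the quadratic term is the most structural: since $\sigma\Delta\Phi=-\ve^{-1}\sigma\rho$, the piece $-\frac{D}{\ve}\int\sigma|\Delta\rho|^2\le0$ has a favorable sign (using $\sigma\ge0$) and is discarded, while cross terms such as $\int\nabla\sigma\cdot\nabla\rho\,\Delta\rho$, $\int\rho\Delta\sigma\,\Delta\rho$, and the terms built from $\nabla\sigma\cdot\nabla\Phi$ are handled by H\"{o}lder's inequality, the interpolation inequalities, \eqref{Phiemb3}, and Young's inequality, again absorbing top-order derivatives into the dissipation. Assembling all estimates yields the differential inequality above with $A(t)=C(1+\|\nabla\Phi\|_{L^\infty}^2+\|\rho\|_{L^4}^2+\dotsb)$ and $B(t)$ a polynomial in the lower-order norms; the crucial point is that $A\in L^1(0,T)$, because $\int_0^t(\|\Delta\rho\|_{L^2}^2+\|\Delta\sigma\|_{L^2}^2)\,\diff\tau\le C$ by Lemma~\ref{lem:W1r} and because $\|\nabla\Phi\|_{L^\infty}$ and $\|\rho\|_{L^p}$ decay exponentially by Lemma~\ref{lem:rhodecay}. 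Gr\"{o}nwall's inequality then gives both the pointwise bound on $\|\Delta\rho\|_{L^2}+\|\Delta\sigma\|_{L^2}$ and the time-integral bound on $\|\nabla\Delta\rho\|_{L^2}^2+\|\nabla\Delta\sigma\|_{L^2}^2$, which is \eqref{deltal2bdd}.

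The main obstacle is the advection term: because Darcy's law replaces viscous dissipation by friction, $u$ is only one derivative smoother than $\rho$ (rather than two, as for Stokes or Navier--Stokes), so $\nabla^2 u$ lives at exactly the order of $\Delta\rho$, the very quantity being estimated. Closing the estimate therefore hinges on exploiting the exponential decay of $\|\nabla\Phi\|_{L^\infty}$ and the $L^2_t H^2$ integrability of $\rho,\sigma$ from Lemma~\ref{lem:W1r}, so that the coefficient multiplying $\|\Delta\rho\|_{L^2}^2$ is integrable in time, and on interpolating carefully so that the only $\|\nabla\Delta\rho\|_{L^2}$-factors that appear can be absorbed into the dissipation.
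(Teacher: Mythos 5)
Your proposal follows essentially the same route as the paper: an $H^2$ energy identity with dissipation $D(\|\nabla\Delta\rho\|_{L^2}^2+\|\nabla\Delta\sigma\|_{L^2}^2)$, the sign-definite term $\frac{D}{\ve}\int\sigma|\Delta\rho|^2\ge 0$ extracted from $\sigma\Delta\Phi=-\ve^{-1}\sigma\rho$, all remaining terms absorbed via H\"older, Gagliardo--Nirenberg and Young, and closure using the exponential decay of $\|\nabla\Phi\|_{L^\infty}$, $\|\rho\|_{L^p}$ and the bound $\int_0^t\|\Delta\rho\|_{L^2}^2+\|\Delta\sigma\|_{L^2}^2\,\diff\tau\le C$ from Lemma~\ref{lem:W1r}. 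Two small differences are worth noting. First, for the advection term the paper integrates by parts once more so that it is paired with $\nabla\Delta\rho$ and only $u$ and $\nabla u$ are needed (terms $I_{1,1}$, $I_{1,2}$ in \eqref{DtH2}), whereas you estimate $\|\Delta u\|_{L^2}$ via the differentiated curl identity; your bound on $\|\nabla^2 u\|_{L^2}$ is correct (it is the paper's \eqref{Delu}, used in the $H^3$ step), but it reintroduces a full factor of $\|\Delta\rho\|_{L^2}$, so after Young your coefficient $A(t)$ necessarily contains positive powers of $\|\Delta\rho\|_{L^2}^2$ --- the integrability of $A$ then genuinely requires the $L^2_tH^2$ bound you cite, so the logic is consistent, just slightly heavier. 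Second, to obtain the $T$-independent constant asserted in Theorem~\ref{thm:strongglobal} you should close by direct integration, $Z(t)\le Z(0)+\sup_\tau W_1\cdot\int_0^t Z+\int_0^t W_2\le C$, rather than Gr\"onwall with a coefficient containing a constant term, which only yields $Ce^{CT}$; the paper's $W_1$ is bounded uniformly in time and $Z, W_2\in L^1(0,\infty)$ uniformly, which is what makes the constant independent of $T$.
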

\begin{proof}
We multiply \eqref{rhot} and \eqref{sigmat0} by $\Delta^2 \rho$ and $\Delta^2 \sigma$, respectively, and integrate over $\T^d$. Integration-by-parts and \eqref{poisson} give
\begin{align}
\label{DtH2}
\begin{split}
& \frac{\diff}{\diff{t}} \left(\|\Delta \rho\|_{L^2}^2 + \|\Delta \sigma\|_{L^2}^2\right) + D \left(\|\nabla \Delta \rho\|_{L^2}^2 + \|\nabla \Delta \sigma\|_{L^2}^2\right) + \frac{D}{\ve} \int \sigma |\Delta \rho|^2\\
&\quad=  \int \nabla \rho \cdot (\nabla u \nabla \Delta \rho) + \int \nabla \sigma \cdot (\nabla u \nabla \Delta \sigma) + \int u \cdot (\nabla \nabla \rho \nabla \Delta \rho) + \int u \cdot (\nabla \nabla \sigma \nabla \Delta \sigma)\\
&\quad\quad + D \int \nabla \Delta \sigma \cdot \nabla \Phi \Delta \rho + 2 D \int \nabla \nabla \sigma : \nabla \nabla \Phi \Delta \rho - \frac{3 D}{\ve} \int \nabla \sigma \cdot \nabla \rho \Delta \rho\\
&\quad\quad + D \int \nabla \Delta \rho \cdot \nabla \Phi \Delta \sigma + 2 D \int \nabla \nabla \rho : \nabla \nabla \Phi \Delta \sigma - \frac{3 D}{\ve} \int |\nabla \rho|^2 \Delta \sigma - \frac{3 D}{\ve} \int \rho \Delta \sigma \Delta \rho\\
&\quad= I_{1, 1} + I_{1, 2} + I_{1, 3} + I_{1, 4} + I_{1, 5},
\end{split}
\end{align}
where
\begin{align*}
I_{1, 1} & = \int \nabla \rho \cdot (\nabla u \nabla \Delta \rho) + \int \nabla \sigma \cdot (\nabla u \nabla \Delta \sigma),\\
I_{1, 2} & = \int u \cdot (\nabla \nabla \rho \nabla \Delta \rho) + \int u \cdot (\nabla \nabla \sigma \nabla \Delta \sigma),\\
I_{1, 3} & = - \frac{3 D}{\ve} \int \nabla \sigma \cdot \nabla \rho \Delta \rho - \frac{3 D}{\ve} \int |\nabla \rho|^2 \Delta \sigma,\\
I_{1, 4} & = D \int \nabla \Delta \sigma \cdot \nabla \Phi \Delta \rho  + D \int \nabla \Delta \rho \cdot \nabla \Phi \Delta \sigma,\\
I_{1, 5} & = 2 D \int \nabla \nabla \sigma : \nabla \nabla \Phi \Delta \rho + 2 D \int \nabla \nabla \rho : \nabla \nabla \Phi \Delta \sigma - \frac{3 D}{\ve} \int \rho \Delta \sigma \Delta \rho.
\end{align*}

For the term $I_{1, 1}$, we apply H\"{o}lder's inequality, the bound \eqref{nabuLr}, the Gagliardo-Nirenberg inequality
\[
\|\nabla f\|_{L^4} \leq C \|\nabla \Delta f\|_{L^2}^{\frac{d}{8}} \|\nabla f\|_{L^2}^{1 - \frac{d}{8}} + C \|\nabla f\|_{L^2},
\]
and Young's inequality to obtain
\bas{DtH2est1}
I_{1, 1} & \leq \|\nabla u\|_{L^4} \left(\|\nabla \rho\|_{L^4} \|\nabla \Delta \rho\|_{L^2} + \|\nabla \sigma\|_{L^4} \|\nabla \Delta \sigma\|_{L^2}\right)\\
& \leq C \|\nabla \Phi\|_{L^\infty} \|\nabla \rho\|_{L^4} \left(\|\nabla \rho\|_{L^4} \|\nabla \Delta \rho\|_{L^2} + \|\nabla \sigma\|_{L^4} \|\nabla \Delta \sigma\|_{L^2}\right)\\
& \leq C \|\nabla \Phi\|_{L^\infty} \left(\|\nabla \rho\|_{L^2}^{2 - \frac{d}{4}} + \|\nabla \sigma\|_{L^2}^{2 - \frac{d}{4}}\right) \left(\|\nabla \Delta \rho\|_{L^2}^{1 + \frac{d}{4}} + \|\nabla \Delta \sigma\|_{L^2}^{1 + \frac{d}{4}}\right)\\
& \qquad + C \|\nabla \Phi\|_{L^\infty} \left(\|\nabla \rho\|_{L^2}^2 + \|\nabla \sigma\|_{L^2}^2\right) \left(\|\nabla \Delta \rho\|_{L^2} + \|\nabla \Delta \sigma\|_{L^2}\right)\\
& \quad \leq \frac{D}{10} \left(\|\nabla \Delta \rho\|_{L^2}^2 + \|\nabla \Delta \sigma\|_{L^2}^2\right) + C \|\nabla \Phi\|_{L^\infty}^{\frac{8}{4 - d}} \left(\|\nabla \rho\|_{L^2}^{\frac{16 - 2 d}{4 - d}} + \|\nabla \sigma\|_{L^2}^{\frac{16 - 2 d}{4 - d}}\right)\\
& \qquad + C \|\nabla \Phi\|_{L^\infty}^2 \left(\|\nabla \rho\|_{L^2}^4 + \|\nabla \sigma\|_{L^2}^4\right).
\eas

To estimate the term $I_{1, 2}$, we use H\"{o}lder's inequality, Young's inequality, Ladyzhenskaya's inequality, and the bound \eqref{uLp}
\bal
I_{1, 2} & \leq \|u\|_{L^4} \left(\|\nabla \nabla \rho\|_{L^4} \|\nabla \Delta \rho\|_{L^2} + \|\nabla \nabla \sigma\|_{L^4} \|\nabla \Delta \sigma\|_{L^2}\right) \notag\\
& \leq \frac{5}{2 D} \|u\|_{L^4}^2 \left(\|\nabla \nabla \rho\|_{L^4}^2 + \|\nabla \nabla \sigma\|_{L^4}^2\right) + \frac{D}{10} \left(\|\nabla \Delta \rho\|_{L^2}^2 + \|\nabla \Delta \sigma\|_{L^2}^2\right) \notag\\
& \leq C \|u\|_{L^4}^2 \left(\|\nabla \nabla \rho\|_{L^2}^{2 - \frac{d}{2}} + \|\nabla \nabla \sigma\|_{L^2}^{2 - \frac{d}{2}}\right) \left(\|\nabla \Delta \rho\|_{L^2}^{\frac{d}{2}} + \|\nabla \Delta \sigma\|_{L^2}^{\frac{d}{2}}\right) + C \|u\|_{L^4}^2 \left(\|\nabla \nabla \rho\|_{L^2}^2 + \|\nabla \nabla \sigma\|_{L^2}^2\right) \notag\\
& \qquad + \frac{D}{10} \left(\|\nabla \Delta \rho\|_{L^2}^2 + \|\nabla \Delta \sigma\|_{L^2}^2\right) \notag\\
& \leq C \left(\|\nabla \Phi\|_{L^\infty}^2 \|\rho\|_{L^4}^2 + \|\nabla \Phi\|_{L^\infty}^{\frac{8}{4 - d}} \|\rho\|_{L^4}^{\frac{8}{4 - d}}\right) \left(\|\nabla \nabla \rho\|_{L^2}^2 + \|\nabla \nabla \sigma\|_{L^2}^2\right) \notag\\
& \qquad + \frac{D}{5} \left(\|\nabla \Delta \rho\|_{L^2}^2 + \|\nabla \Delta \sigma\|_{L^2}^2\right). \label{DtH2est2}
\eal

To estimate the term $I_{1, 3}$, we use H\"{o}lder's inequality, Ladyzhenskaya's inequality, the Gagliardo-Nirenberg interpolation inequality
\[
\|\Delta f\|_{L^2} \leq C \|\nabla \Delta f\|_{L^2}^{\frac{1}{2}} \|\nabla f\|_{L^2}^{\frac{1}{2}} + C \|\nabla f\|_{L^2},
\]
and Young's inequality
\begin{align}
\label{DtH2est4}
\begin{split}
I_{1, 3} & \leq \frac{3 D}{\ve} \|\nabla \rho\|_{L^4} \|\nabla \sigma\|_{L^4} \|\Delta \rho\|_{L^2} + \frac{3 D}{\ve} \|\nabla \rho\|_{L^4}^2 \|\Delta \sigma\|_{L^2}\\
& \leq C \left(\|\nabla \rho\|_{L^2} + \|\nabla \sigma\|_{L^2}\right) \left(\|\Delta \rho\|_{L^2}^{\frac{d}{2}} + \|\Delta \sigma\|_{L^2}^{\frac{d}{2}}\right) \left(\|\nabla \Delta \rho\|_{L^2}^{2 - \frac{d}{2}} + \|\nabla \Delta \sigma\|_{L^2}^{2 - \frac{d}{2}}\right)\\
& \qquad + C \|\nabla \rho\|_{L^2}^2 \|\nabla \sigma\|_{L^2}\\
& \leq \frac{D}{10} \left(\|\nabla \Delta \rho\|_{L^2}^2 + \|\nabla \Delta \sigma\|_{L^2}^2\right) + C \left(\|\nabla \rho\|_{L^2}^{\frac{4}{d}} + \|\nabla \sigma\|_{L^2}^{\frac{4}{d}}\right) \left(\|\Delta \rho\|_{L^2}^2 + \|\Delta \sigma\|_{L^2}^2\right)\\
& \qquad + C \|\nabla \rho\|_{L^2}^2 \|\nabla \sigma\|_{L^2}.
\end{split}
\end{align}

The estimates for the other terms in \eqref{DtH2} are similar. By H\"{o}lder's inequality, Young's inequality, the elliptic estimates, and Ladyzhenskaya's inequality, we obtain
\begin{align}
\label{DtH2est3}
\begin{split}
I_{1, 4} & \leq D \|\nabla \Phi\|_{L^\infty} \left(\|\nabla \Delta \sigma\|_{L^2} \|\Delta \rho\|_{L^2} + \|\nabla \Delta \rho\|_{L^2} \|\Delta \sigma\|_{L^2}\right)\\
& \leq \frac{5}{4} \|\nabla \Phi\|_{L^\infty}^2 \left(\|\Delta \rho\|_{L^2}^2 + \|\Delta \sigma\|_{L^2}^2\right) + \frac{D}{5} \left(\|\nabla \Delta \rho\|_{L^2}^2 + \|\nabla \Delta \sigma\|_{L^2}^2\right),
\end{split}
\end{align}
and
\begin{align}
\label{DtH2est5}
\begin{split}
I_{1, 5} & \leq C \|\nabla \nabla \Phi\|_{L^2} \|\Delta \rho\|_{L^4} \|\Delta \sigma\|_{L^4} + C \|\rho\|_{L^2} \|\Delta \rho\|_{L^4} \|\Delta \sigma\|_{L^4}\\
& \leq C \|\rho\|_{L^2} \left(\|\Delta \rho\|_{L^2}^{2 - \frac{d}{2}} + \|\Delta \sigma\|_{L^2}^{2 - \frac{d}{2}}\right) \left(\|\nabla \Delta \rho\|_{L^2}^{\frac{d}{2}} + \|\nabla \Delta \sigma\|_{L^2}^{\frac{d}{2}}\right)\\
& \leq C \|\rho\|_{L^2}^{\frac{4}{4 - d}} \left(\|\Delta \rho\|_{L^2}^2 + \|\Delta \sigma\|_{L^2}^2\right) + \frac{D}{5} \left(\|\nabla \Delta \rho\|_{L^2}^2 + \|\nabla \Delta \sigma\|_{L^2}^2\right).
\end{split}
\end{align}
Using the estimates \eqref{DtH2est1}--\eqref{DtH2est5} in \eqref{DtH2}, we conclude 
\bal
& \frac{1}{2} \frac{\diff}{\diff{t}} \left(\|\Delta \rho\|_{L^2}^2 + \|\Delta \sigma\|_{L^2}^2\right) + \frac{D}{10} \left(\|\nabla \Delta \rho\|_{L^2}^2 + \|\nabla \Delta \sigma\|_{L^2}^2\right) + \frac{D}{\ve} \int \sigma |\Delta \rho|^2 \notag\\
& \leq C \left(\|\nabla \Phi\|_{L^\infty}^2 \|\rho\|_{L^2}^2 + \|\nabla \Phi\|_{L^\infty}^{\frac{8}{4 - d}} \|\rho\|_{L^2}^{\frac{8}{4 - d}} + \|\rho\|_{L^2}^{\frac{4}{4 - d}} + \|\nabla \Phi\|_{L^\infty}^2 + \|\nabla \rho\|_{L^2}^{\frac{4}{d}} + \|\nabla \sigma\|_{L^2}^{\frac{4}{d}}\right) \left(\|\Delta \rho\|_{L^2}^2 + \|\Delta \sigma\|_{L^2}^2\right) \notag\\
& \quad + C \|\nabla \rho\|_{L^2}^2 \|\nabla \sigma\|_{L^2} + C \|\nabla \Phi\|_{L^\infty}^2 \left(\|\nabla \rho\|_{L^2}^4 + \|\nabla \sigma\|_{L^2}^4\right) + C \|\nabla \Phi\|_{L^\infty}^{\frac{8}{4 - d}} \left(\|\nabla \rho\|_{L^2}^{\frac{16 - 2 d}{4 - d}} + \|\nabla \sigma\|_{L^2}^{\frac{16 - 2 d}{4 - d}}\right). \label{DtH2est02}
\eal

For simplicity, we denote
\bax
Z & = \|\Delta \rho\|_{L^2}^2 + \|\Delta \sigma\|_{L^2}^2,\\
W_1 & = \|\nabla \Phi\|_{L^\infty}^2 \|\rho\|_{L^2}^2 + \|\nabla \Phi\|_{L^\infty}^{\frac{8}{4 - d}} \|\rho\|_{L^2}^{\frac{8}{4 - d}} + \|\rho\|_{L^2}^{\frac{4}{4 - d}} + \|\nabla \Phi\|_{L^\infty}^2 + \|\nabla \rho\|_{L^2}^{\frac{4}{d}} + \|\nabla \sigma\|_{L^2}^{\frac{4}{d}},\\
W_2 & = \|\nabla \rho\|_{L^2}^2 \|\nabla \sigma\|_{L^2} + \|\nabla \Phi\|_{L^\infty}^2 \left(\|\nabla \rho\|_{L^2}^4 + \|\nabla \sigma\|_{L^2}^4\right) + \|\nabla \Phi\|_{L^\infty}^{\frac{8}{4 - d}} \left(\|\nabla \rho\|_{L^2}^{\frac{16 - 2 d}{4 - d}} + \|\nabla \sigma\|_{L^2}^{\frac{16 - 2 d}{4 - d}}\right).
\eax
From \eqref{solest1}(iii), it follows that
\begin{align}
\label{Ztimeintbdd}
\int_0^t Z(\tau) \diff{\tau} \leq C.
\end{align}
By \eqref{solest1}(i)--(iii), we have that
\begin{align}
%\label{W1W3bdd}%referenced!
\sup_{\tau \in [0, t]} W_1(\tau) \leq C,
\end{align}
and that
\bas{W2bdd}
\int_0^t W_2(\tau) \diff{\tau} & \leq \sup_{\tau \in [0, t]} \|\nabla \sigma(\tau)\|_{L^2} \cdot \int_0^t \|\nabla \rho(\tau)\|_{L^2}^2 \diff{\tau}\\
& \qquad + \sup_{\tau \in [0, t]} \left(\|\nabla \rho(\tau)\|_{L^2}^4 + \|\nabla \sigma(\tau)\|_{L^2}^4\right) \cdot \int_0^t \|\nabla \Phi(\tau)\|_{L^\infty}^2 \diff{\tau}\\
& \qquad + \sup_{\tau \in [0, t]} \left(\|\nabla \rho(\tau)\|_{L^2}^{\frac{16 - 2 d}{4 - d}} + \|\nabla \sigma(\tau)\|_{L^2}^{\frac{16 - 2 d}{4 - d}}\right) \cdot \int_0^t \|\nabla \Phi(\tau)\|_{L^\infty}^{\frac{8}{4 - d}} \diff{\tau}\\
& \leq C.
\eas
Using the fact that $\sigma \geq 0$ and applying the bounds \eqref{Ztimeintbdd}--\eqref{W2bdd}, the inequality \eqref{DtH2est02} leads to \eqref{deltal2bdd}.
\end{proof}

Next, we propagate the $H^3$ regularity of the solutions.
\begin{lemma}
Let $d = 2, 3$ and $r \geq 2$. Let $c_1(0), c_2(0) \in H^3(\T^d)$ be nonnegative functions satisfying \eqref{neut}. Suppose $(\rho, \sigma, u)$ solves \eqref{rhot}--\eqref{incomp} with initial data \eqref{initial} on the interval $[0, T]$. Then for any $t \in [0, T]$, we have
\bal
& \|\nabla \Delta \rho(t)\|_{L^2} + \|\nabla \Delta \sigma(t)\|_{L^2} + \int_0^t \|\Delta^2 \rho(\tau)\|_{L^2}^2 + \|\Delta^2 \sigma(\tau)\|_{L^2}^2 \leq C, \label{rhosigHsbdd}\\
& \|u\|_{H^3} \leq C, \label{uHsbdd}
\eal
for some constants $C > 0$.
\end{lemma}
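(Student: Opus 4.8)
The plan is to repeat the energy argument of the preceding lemma one differential order higher. For smooth solutions I would multiply \eqref{rhot} by $-\Delta^3 \rho$ and \eqref{sigmat0} by $-\Delta^3 \sigma$, integrate over $\T^d$, add, and substitute \eqref{poisson}. Using periodicity and $\nabla \cdot u = 0$, the time derivatives become $\frac{1}{2}\frac{\diff}{\diff t}(\|\nabla \Delta \rho\|_{L^2}^2 + \|\nabla \Delta \sigma\|_{L^2}^2)$, the linear diffusion produces the dissipation $D(\|\Delta^2 \rho\|_{L^2}^2 + \|\Delta^2 \sigma\|_{L^2}^2)$, and the self-coupling $D \sigma \Delta \Phi = -\frac{D}{\ve}\sigma\rho$ in the $\rho$-equation yields, after two integrations by parts, the nonnegative term $\frac{D}{\ve}\int \sigma |\nabla \Delta \rho|^2$ together with lower-order remainders. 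Since $\sigma = c_1 + c_2 \ge 0$, I would discard this term exactly as $\frac{D}{\ve}\int \sigma |\Delta \rho|^2$ was discarded in \eqref{DtH2}. Writing $V = \|\nabla \Delta \rho\|_{L^2}^2 + \|\nabla \Delta \sigma\|_{L^2}^2$, this produces an identity of the form $\frac{1}{2}\frac{\diff}{\diff t} V + D(\|\Delta^2 \rho\|_{L^2}^2 + \|\Delta^2 \sigma\|_{L^2}^2) + \frac{D}{\ve}\int \sigma |\nabla \Delta \rho|^2 = \sum_j J_j$, where the $J_j$ collect the advective and electro-coupling terms, grouped as in $I_{1,1}$--$I_{1,5}$ of the previous proof.

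I would then bound each $J_j$ by the same recipe used in the two preceding lemmas: control every factor $\nabla^k \Phi$ by $\rho$ using \eqref{poisson}, Calder\'on--Zygmund and elliptic estimates (so that $\|\nabla^2 \Phi\|_{L^q} \le C\|\rho\|_{L^q}$ and $\|\nabla^3 \Phi\|_{L^2} \le C\|\nabla \rho\|_{L^2}$) together with \eqref{phidecay2d}; isolate the top-order factor with H\"older's inequality; interpolate the intermediate derivatives by Gagliardo--Nirenberg inequalities of the type $\|\nabla^2 f\|_{L^4} \le C\|\Delta^2 f\|_{L^2}^{\theta}\|\nabla f\|_{L^2}^{1-\theta} + C\|\nabla f\|_{L^2}$; and use Young's inequality to absorb all top-order contributions into $\frac{D}{10}(\|\Delta^2 \rho\|_{L^2}^2 + \|\Delta^2 \sigma\|_{L^2}^2)$. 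What survives is of the form $a(t) V + b(t)$, with $a,b$ built from the lower-order norms.

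The main obstacle is the advection term. Differentiating $u \cdot \nabla \rho$ to third order formally produces a factor $\nabla^3 u$, which I cannot bound directly; I would integrate by parts to move one derivative onto the $\rho$-factors, reducing the requirement to $\nabla^2 u$ (the genuinely top-order piece $u \cdot \nabla(\nabla \Delta \rho) \cdot \nabla \Delta \rho = \tfrac{1}{2} u \cdot \nabla |\nabla \Delta \rho|^2$ vanishes upon integration since $\nabla \cdot u = 0$). I would bound $\nabla^2 u$ through the vorticity formulation: from $\curl u = -\nabla \rho \times \nabla \Phi$ (respectively $\nabla^\perp \cdot u = -\nabla^\perp \rho \cdot \nabla \Phi$ when $d = 2$) and the Calder\'on--Zygmund bound $\|\nabla^2 u\|_{L^p} \le C\|\nabla \curl u\|_{L^p}$, one obtains $\|\nabla^2 u\|_{L^p} \le C(\|\nabla^2 \rho\|_{L^p}\|\nabla \Phi\|_{L^\infty} + \|\nabla \rho\|_{L^{2p}}\|\nabla^2 \Phi\|_{L^{2p}})$, while the lower-order velocity factors are handled by \eqref{uLp}--\eqref{nabuLr}. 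After these reductions, the coefficient $a(t)$ multiplying $V$ and the forcing $b(t)$ are integrable on $[0,T]$: the factors $\|\nabla \Phi\|_{L^\infty}$, $\|\rho\|_{L^p}$, $\|\sigma - \bar{\sigma}\|_{L^p}$ decay by \eqref{rholpdecay}, \eqref{phidecay2d}, \eqref{sigmalpdecay}, the norms $\|\nabla \rho\|_{L^2}$, $\|\Delta \rho\|_{L^2}$ are uniformly bounded, and $\int_0^t (\|\Delta \rho\|_{L^2}^2 + \|\nabla \Delta \rho\|_{L^2}^2)\diff \tau \le C$ by \eqref{solest1}(iii) and \eqref{deltal2bdd}. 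Gr\"onwall's inequality then yields $V(t) \le C$ and $\int_0^t (\|\Delta^2 \rho\|_{L^2}^2 + \|\Delta^2 \sigma\|_{L^2}^2)\diff \tau \le C$, which is \eqref{rhosigHsbdd}.

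Finally, \eqref{uHsbdd} follows not from an evolution equation but from the algebraic structure of Darcy's law. Applying the Leray projector $\mathbb{P}$ to \eqref{darcy} removes $\nabla p$ and gives $u = -\mathbb{P}(\rho \nabla \Phi)$; since $\mathbb{P}$ is bounded on $H^3(\T^d)$, we have $\|u\|_{H^3} \le C\|\rho \nabla \Phi\|_{H^3}$. As $H^3(\T^d)$ is an algebra for $d = 2,3$, and elliptic regularity for \eqref{poisson} gives $\|\nabla \Phi\|_{H^3} \le C\|\Phi\|_{H^4} \le C\|\rho\|_{H^2}$, a Moser product estimate yields $\|\rho \nabla \Phi\|_{H^3} \le C\|\rho\|_{H^3}\|\nabla \Phi\|_{H^3} \le C\|\rho\|_{H^3}\|\rho\|_{H^2}$, which is bounded uniformly in time by \eqref{rhosigHsbdd} and the earlier estimates. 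Hence $\|u(t)\|_{H^3} \le C$.
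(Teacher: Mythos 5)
Your proposal is correct and follows essentially the same route as the paper: test with $-\Delta^3\rho$, $-\Delta^3\sigma$, keep the good terms $D(\|\Delta^2\rho\|_{L^2}^2+\|\Delta^2\sigma\|_{L^2}^2)$ and $\frac{D}{\ve}\int\sigma|\nabla\Delta\rho|^2$, close with Gagliardo--Nirenberg, Young, the decay/boundedness estimates from the earlier lemmas, and Gr\"onwall, then read off $\|u\|_{H^3}$ algebraically from Darcy's law and the elliptic estimate for $\Phi$. The only (immaterial) differences are that the paper controls the second derivatives of $u$ via $\|\Delta u\|_{L^2}\le C\|\Delta(\rho\nabla\Phi)\|_{L^2}$ using the Leray projector rather than your curl/Calder\'on--Zygmund route, and it absorbs the term $\int u\cdot\nabla\Delta\rho\,\Delta^2\rho$ by Young's inequality instead of invoking the antisymmetry cancellation.
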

\begin{proof}
We multiply \eqref{rhot} and \eqref{sigmat0} by $- \Delta^3 \rho$ and $- \Delta^3 \sigma$ respectively, integrate over $\T^d$. We integrate by parts and use \eqref{poisson} to obtain
\begin{align}
\label{DtH3}
\begin{split}
& \frac{1}{2} \frac{\diff}{\diff{t}} \left(\|\nabla \Delta \rho\|_{L^2}^2 + \|\nabla \Delta \sigma\|_{L^2}^2\right) + D \left(\|\Delta^2 \rho\|_{L^2}^2 + \|\Delta^2 \sigma\|_{L^2}^2\right) + \frac{D}{\ve} \int \sigma |\nabla \Delta \rho|^2\\
&\quad= I_{2, 1} + I_{2, 2} + I_{2, 3} + I_{2, 4} + I_{2, 5} + I_{2, 6},
\end{split}
\end{align}
where
\begin{align*}
I_{2, 1} & = \int \Delta u \cdot (\nabla \rho \Delta^2 \rho + \nabla \sigma \Delta^2 \sigma) + \int u \cdot (\nabla \Delta \rho \Delta^2 \rho + \nabla \Delta \sigma \Delta^2 \sigma) + 2 \int \nabla u : (\nabla \nabla \rho \Delta^2 \rho + \nabla \nabla \sigma \Delta^2 \sigma),\\
I_{2, 2} & = D \int \nabla \Delta \rho \cdot \nabla \nabla \Delta \sigma \nabla \Phi + D \int \nabla \Delta \sigma \cdot \nabla \nabla \Delta \rho \nabla \Phi,\\
I_{2, 3} & = - \frac{2 D}{\ve} \int \nabla \Delta \rho \cdot (\nabla \nabla \rho \nabla \sigma) - \frac{5 D}{\ve} \int \nabla \Delta \sigma \cdot (\nabla \nabla \rho \nabla \rho) - \frac{3 D}{\ve} \int \nabla \Delta \rho \cdot (\nabla \nabla \sigma \nabla \rho)\\
& \quad - \frac{D}{\ve} \int \nabla \Delta \rho \cdot \nabla \rho \Delta \sigma - \frac{3 D}{\ve} \int \nabla \Delta \rho \cdot \nabla \sigma \Delta \rho - \frac{4 D}{\ve} \int \nabla \Delta \sigma \cdot \nabla \rho \Delta \rho,\\
I_{2, 4} & = 2 D \int \nabla \rho \cdot (\nabla \nabla \nabla \sigma : \nabla \nabla \Phi) + 2 D \int \nabla \sigma \cdot (\nabla \nabla \nabla \rho : \nabla \nabla \Phi),\\
I_{2, 5} & = - \frac{3 D}{\ve} \int \nabla \Delta \rho \cdot \nabla \Delta \sigma \rho,\\
I_{2, 6} & = 2 D \int \nabla \rho \cdot (\nabla \nabla \nabla \Phi : \nabla \nabla \sigma) + 2 D \int \nabla \sigma \cdot (\nabla \nabla \nabla \Phi : \nabla \nabla \rho).
\end{align*}

First, from \eqref{darcy} and the fact that the Leray projector commutes with the Laplacian, we find that
\bas{Delu}
\|\Delta u\|_{L^2} \leq C \|\Delta (\rho \nabla \Phi)\|_{L^2} \leq C \|\Delta \rho\|_{L^2} \|\nabla \Phi\|_{L^\infty} + C \|\nabla \rho\|_{L^6} \|\rho\|_{L^3},
\eas
where we also used H\"{o}lder's inequality and the equation \eqref{poisson} in the second inequality.

For the terms involving velocity $u$, we use H\"{o}lder's inequalities for $L^2$-$L^2$-$L^\infty$ or $L^2$-$L^3$-$L^6$, the estimate \eqref{nabuLr}, the Gagliardo-Nirenberg interpolation inequality
\[
\|\nabla \nabla f\|_{L^3} \leq C \|\Delta^2 f\|_{L^2}^{\frac{d}{12}} \|\Delta f\|_{L^2}^{1 - \frac{d}{12}} + C \|\Delta f\|_{L^2},
\]
the embeddings $H^2(\T^d) \hookrightarrow L^\infty(\T^d)$ and $H^1(\T^d) \hookrightarrow L^6(\T^d)$, $d = 2, 3$, the bound \eqref{Delu}, and Young's inequality,
\bal
I_{2, 1} & \leq \|\Delta u\|_{L^2} \left(\|\nabla \rho\|_{L^\infty} \|\Delta^2 \rho\|_{L^2} + \|\nabla \sigma\|_{L^\infty} \|\Delta^2 \sigma\|_{L^2}\right) + \|u\|_{L^\infty} \left(\|\Delta^2 \rho\|_{L^2} \|\nabla \Delta \rho\|_{L^2} + \|\Delta^2 \sigma\|_{L^2} \|\nabla \Delta \sigma\|_{L^2}\right) \notag\\
& \quad + 2 \|\nabla u\|_{L^6} \left(\|\nabla \nabla \rho\|_{L^3} \|\Delta^2 \rho\|_{L^2} + \|\nabla \nabla \sigma\|_{L^3} \|\Delta^2 \sigma\|_{L^2}\right) \notag\\
& \leq \|\Delta u\|_{L^2} \left(\|\nabla \rho\|_{L^\infty} \|\Delta^2 \rho\|_{L^2} + \|\nabla \sigma\|_{L^\infty} \|\Delta^2 \sigma\|_{L^2}\right) + \|u\|_{L^\infty} \left(\|\Delta^2 \rho\|_{L^2} \|\nabla \Delta \rho\|_{L^2} + \|\Delta^2 \sigma\|_{L^2} \|\nabla \Delta \sigma\|_{L^2}\right) \notag\\
& \quad + C \|\nabla \rho\|_{L^6} \|\nabla \Phi\|_{L^\infty} \left(\|\Delta \rho\|_{L^2}^{1 - \frac{d}{12}} + \|\Delta \sigma\|_{L^2}^{1 - \frac{d}{12}}\right) \left(\|\Delta^2 \rho\|_{L^2}^{1 + \frac{d}{12}} + \|\Delta^2 \sigma\|_{L^2}^{1 + \frac{d}{12}}\right) \notag\\
& \quad + C \|\nabla \rho\|_{L^6} \|\nabla \Phi\|_{L^\infty} \left(\|\Delta \rho\|_{L^2} + \|\Delta \sigma\|_{L^2}\right) \left(\|\Delta^2 \rho\|_{L^2}+ \|\Delta^2 \sigma\|_{L^2}\right) \notag\\
& \leq \frac{D}{5} \left(\|\Delta^2 \rho\|_{L^2}^2 + \|\Delta^2 \sigma\|_{L^2}^2\right) + C \|\Delta u\|_{L^2}^2 \left(\|\nabla \rho\|_{L^\infty}^2 + \|\nabla \sigma\|_{L^\infty}^2\right) + C \|u\|_{H^2}^2 \left(\|\nabla \Delta \rho\|_{L^2}^2 + \|\nabla \Delta \sigma\|_{L^2}^2\right) \notag\\
& \quad + C \left(\|\nabla \rho\|_{L^6}^{\frac{24}{12 - d}} \|\nabla \Phi\|_{L^\infty}^{\frac{24}{12 - d}} + \|\nabla \rho\|_{L^6}^2 \|\nabla \Phi\|_{L^\infty}^2\right) \left(\|\Delta \rho\|_{L^2}^2 + \|\Delta \sigma\|_{L^2}^2\right) \notag\\
& \leq \frac{D}{5} \left(\|\Delta^2 \rho\|_{L^2}^2 + \|\Delta^2 \sigma\|_{L^2}^2\right) + C \left(\|\Delta \rho\|_{L^2}^2 \|\nabla \Phi\|_{L^\infty}^2 + \|\rho\|_{H^2}^2 \|\rho\|_{L^3}^2\right) \left(\|\nabla \Delta \rho\|_{L^2}^2 + \|\nabla \Delta \sigma\|_{L^2}^2\right) \notag\\
& \quad + C \left(\|\rho\|_{H^2}^{\frac{24}{12 - d}} \|\nabla \Phi\|_{L^\infty}^{\frac{24}{12 - d}} + \|\rho\|_{H^2}^2 \|\nabla \Phi\|_{L^\infty}^2\right) \left(\|\Delta \rho\|_{L^2}^2 + \|\Delta \sigma\|_{L^2}^2\right). \label{DtH3est1}
\eal
For the term $I_{2, 2}$, we use H\"{o}lder's inequality and Young's inequality to get
\begin{align}
%\label{DtH3est2}%referenced!
\begin{split}
I_{2, 2} & \leq D \|\nabla \Phi\|_{L^\infty} \left(\|\nabla \Delta \rho\|_{L^2} \|\nabla \nabla \Delta \sigma\|_{L^2} + \|\nabla \Delta \sigma\|_{L^2} \|\nabla \nabla \Delta \rho\|_{L^2}\right)\\
& \leq C \|\nabla \Phi\|_{L^\infty}^2 \left(\|\nabla \Delta \rho\|_{L^2}^2 + \|\nabla \Delta \sigma\|_{L^2}^2\right) + \frac{D}{5} \left(\|\Delta^2 \rho\|_{L^2}^2 + \|\Delta^2 \sigma\|_{L^2}^2\right).
\end{split}
\end{align}
By H\"{o}lder's inequality for $L^2$-$L^4$-$L^4$ and the Gagliardo-Nirenberg inequalities
\begin{align*}
& \|\nabla \Delta f\|_{L^2} \leq C \|\Delta^2 f\|_{L^2}^{\frac{1}{2}} \|\Delta f\|_{L^2}^{\frac{1}{2}} + C \|\Delta f\|_{L^2},\\
& \|\Delta f\|_{L^4} \leq C \|\Delta^2 f\|_{L^2}^{\frac{d + 4}{12}} \|\nabla f\|_{L^2}^{\frac{8 - d}{12}} + C \|\nabla f\|_{L^2},\\
& \|\nabla f\|_{L^4} \leq C \|\Delta^2 f\|_{L^2}^{\frac{d}{12}} \|\nabla f\|_{L^2}^{1 - \frac{d}{12}} + C \|\nabla f\|_{L^2},
\end{align*}
we obtain
\bax
I_{2, 3} & \leq C \|\nabla \Delta \rho\|_{L^2} \|\Delta \rho\|_{L^4} \|\nabla \sigma\|_{L^4} + C \|\nabla \Delta \sigma\|_{L^2} \|\Delta \rho\|_{L^4} \|\nabla \rho\|_{L^4} + C \|\nabla \Delta \rho\|_{L^2} \|\Delta \sigma\|_{L^4} \|\nabla \rho\|_{L^4}\\
& \leq C \left(\|\Delta^2 \rho\|_{L^2}^{\frac{1}{2}} \|\Delta \rho\|_{L^2}^{\frac{1}{2}} + \|\Delta \rho\|_{L^2}\right) \left(\|\Delta^2 \rho\|_{L^2}^{\frac{d + 4}{12}} \|\nabla \rho\|_{L^2}^{\frac{8 - d}{12}} + \|\nabla \rho\|_{L^2}\right) \left(\|\Delta^2 \sigma\|_{L^2}^{\frac{d}{12}} \|\nabla \sigma\|_{L^2}^{1 - \frac{d}{12}} + \|\nabla \sigma\|_{L^2}\right)\\
& \quad + C \left(\|\Delta^2 \sigma\|_{L^2}^{\frac{1}{2}} \|\Delta \sigma\|_{L^2}^{\frac{1}{2}} + \|\Delta \sigma\|_{L^2}\right) \left(\|\Delta^2 \rho\|_{L^2}^{\frac{d + 4}{12}} \|\nabla \rho\|_{L^2}^{\frac{8 - d}{12}} + \|\nabla \rho\|_{L^2}\right) \left(\|\Delta^2 \rho\|_{L^2}^{\frac{d}{12}} \|\nabla \rho\|_{L^2}^{1 - \frac{d}{12}} + \|\nabla \rho\|_{L^2}\right)\\
& \quad + C \left(\|\Delta^2 \rho\|_{L^2}^{\frac{1}{2}} \|\Delta \rho\|_{L^2}^{\frac{1}{2}} + \|\Delta \rho\|_{L^2}\right) \left(\|\Delta^2 \sigma\|_{L^2}^{\frac{d + 4}{12}} \|\nabla \sigma\|_{L^2}^{\frac{8 - d}{12}} + \|\nabla \sigma\|_{L^2}\right) \left(\|\Delta^2 \rho\|_{L^2}^{\frac{d}{12}} \|\nabla \rho\|_{L^2}^{1 - \frac{d}{12}} + \|\nabla \rho\|_{L^2}\right)\\
& \leq C \left(\|\Delta^2 \rho\|_{L^2}^{\frac{5 + d}{6}} + \|\Delta^2 \sigma\|_{L^2}^{\frac{5 + d}{6}}\right) \left( \|\Delta \rho\|_{L^2}^{\frac{1}{2}} + \|\Delta \sigma\|_{L^2}^{\frac{1}{2}}\right) \left( \|\nabla \rho\|_{L^2}^{\frac{10 - d}{6}} + \|\nabla \sigma\|_{L^2}^{\frac{10 - d}{6}}\right)\\
& \quad + C \left(\|\nabla \rho\|_{L^2}^2 + \|\nabla \sigma\|_{L^2}^2\right) \left(\|\Delta \rho\|_{L^2} + \|\Delta \sigma\|_{L^2}\right),
\eax
which, by Young's inequality, implies
\begin{align}
\label{DtH3est3}
\begin{split}
I_{2, 3}
& \leq \frac{D}{5} \left(\|\Delta^2 \rho\|_{L^2}^2 + \|\Delta^2 \sigma\|_{L^2}^2\right) + C \left(\|\nabla \rho\|_{L^2}^{\frac{20 - 2 d}{7 - d}} + \|\nabla \sigma\|_{L^2}^{\frac{20 - 2 d}{7 - d}}\right) \left(\|\Delta \rho\|_{L^2}^{\frac{6}{7 - d}} + \|\Delta \sigma\|_{L^2}^{\frac{6}{7 - d}}\right)\\
&\qquad\qquad+ C \left(\|\nabla \rho\|_{L^2}^2 + \|\nabla \sigma\|_{L^2}^2\right) \left(\|\Delta \rho\|_{L^2} + \|\Delta \sigma\|_{L^2}\right).
\end{split}
\end{align}
The estimates for $I_{2, 4}$ and $I_{2, 5}$ follow from H\"{o}lder's inequality, the Sobolev embedding $H^1(\T^d) \hookrightarrow L^6(\T^d)$, and Young's inequality,
\begin{align}
\label{DtH3est4}
\begin{split}
I_{2, 4} & \leq C \|\rho\|_{L^3} \left(\|\nabla \rho\|_{L^6} \|\nabla \Delta \sigma\|_{L^2} + \|\nabla \sigma\|_{L^6} \|\nabla \Delta \rho\|_{L^2}\right)\\
& \leq C \|\rho\|_{L^3} \left(\|\rho\|_{H^2} + \|\sigma - \bar{\sigma}\|_{H^2}\right) \left(\|\nabla \Delta \rho\|_{L^2} + \|\nabla \Delta \sigma\|_{L^2}\right)\\
& \leq C \|\rho\|_{L^3}^2 \left(\|\rho\|_{H^2}^2 + \|\sigma - \bar{\sigma}\|_{H^2}^2\right) + C \left(\|\nabla \Delta \rho\|_{L^2}^2 + \|\nabla \Delta \sigma\|_{L^2}^2\right),
\end{split}
\end{align}
and
\begin{align}
\label{DtH3est5}
\begin{split}
I_{2, 5} & \leq C \|\rho\|_{L^\infty} \|\nabla \Delta \rho\|_{L^2} \|\nabla \Delta \sigma\|_{L^2} \leq C \|\rho\|_{H^2} \|\nabla \Delta \rho\|_{L^2} \|\nabla \Delta \sigma\|_{L^2}.
\end{split}
\end{align}
Finally, we use H\"{o}lder's inequality and Ladyzhenskaya's inequality to obtain
\bal
I_{2, 6} & \leq C \|\nabla \rho\|_{L^4}^2 \|\Delta \sigma\|_{L^2} + C \|\nabla \sigma\|_{L^4} \|\nabla \rho\|_{L^4} \|\Delta \rho\|_{L^2} \notag\\
& \leq C \left(\|\nabla \rho\|_{L^2}^2 + \|\nabla \sigma\|_{L^2}^2\right) \left(\|\Delta \rho\|_{L^2} + \|\Delta \sigma\|_{L^2}\right) \notag\\
& \quad + C \left(\|\nabla \rho\|_{L^2}^{2 - \frac{d}{2}} + \|\nabla \sigma\|_{L^2}^{2 - \frac{d}{2}}\right) \left(\|\Delta \rho\|_{L^2}^{1 + \frac{d}{2}} + \|\Delta \sigma\|_{L^2}^{1 + \frac{d}{2}}\right). \label{DtH3est6}
\eal
Gathering the estimates \eqref{DtH3est1}--\eqref{DtH3est6} into \eqref{DtH3}, we arrive at
\bal
& \frac{1}{2} \frac{\diff}{\diff{t}} \left(\|\nabla \Delta \rho\|_{L^2}^2 + \|\nabla \Delta \sigma\|_{L^2}^2\right) + \frac{D}{5} \left(\|\Delta^2 \rho\|_{L^2}^2 + \|\Delta^2 \sigma\|_{L^2}^2\right) + \frac{D}{\ve} \int \sigma |\nabla \Delta \rho|^2 \notag\\
& \quad \leq C \left(1 + \|\rho\|_{H^2} + \|\nabla \Phi\|_{L^\infty}^2 + \|\Delta \rho\|_{L^2}^2 \|\nabla \Phi\|_{L^\infty}^2 + \|\rho\|_{H^2}^2 \|\rho\|_{L^3}^2\right) \left(\|\nabla \Delta \rho\|_{L^2}^2 + \|\nabla \Delta \sigma\|_{L^2}^2\right) \notag\\
& \qquad + C \|\rho\|_{L^3}^2 \left(\|\rho\|_{H^2}^2 + \|\sigma - \bar{\sigma}\|_{H^2}^2\right) + C \left(\|\nabla \rho\|_{L^2}^{2 - \frac{d}{2}} + \|\nabla \sigma\|_{L^2}^{2 - \frac{d}{2}}\right) \left(\|\Delta \rho\|_{L^2}^{1 + \frac{d}{2}} + \|\Delta \sigma\|_{L^2}^{1 + \frac{d}{2}}\right) \notag\\
& \qquad + C \left(\|\nabla \rho\|_{L^2}^{\frac{20 - 2d}{7 - d}} + \|\nabla \sigma\|_{L^2}^{\frac{20 - 2d}{7 - d}}\right) \left(\|\Delta \rho\|_{L^2}^{\frac{6}{7 - d}} + \|\Delta \sigma\|_{L^2}^{\frac{6}{7 - d}}\right) \notag\\
& \qquad + C \left(\|\rho\|_{H^2}^{\frac{24}{12 - d}} \|\nabla \Phi\|_{L^\infty}^{\frac{24}{12 - d}} + \|\rho\|_{H^2}^2 \|\nabla \Phi\|_{L^\infty}^2\right) \left(\|\Delta \rho\|_{L^2}^2 + \|\Delta \sigma\|_{L^2}^2\right) \notag\\
& \qquad + C \left(\|\nabla \rho\|_{L^2}^2 + \|\nabla \sigma\|_{L^2}^2\right) \left(\|\Delta \rho\|_{L^2} + \|\Delta \sigma\|_{L^2}\right). \label{DtH3estlast}
\eal
We drop the dissipation terms, integrate in time, and use the bounds \eqref{solest1}(i)--(iii) and \eqref{deltal2bdd} to obtain
\[
\|\nabla \Delta \rho(t)\|_{L^2}^2 + \|\nabla \Delta \sigma(t)\|_{L^2}^2 \leq C.
\]
Going back to \eqref{DtH3estlast}, we conclude that
\[
\int_0^t \|\Delta^2 \rho(\tau)\|_{L^2} + \|\Delta^2 \sigma(\tau)\|_{L^2} \diff{\tau} \leq C.
\]
Finally, the Leibnitz rule, H\"{o}lder's inequality, and \eqref{rhosigHsbdd} conclude \eqref{uHsbdd}.
\end{proof}

The last lemma establishes the uniqueness of strong solutions.
\begin{lemma}
Let $d = 2, 3$ and $s > 1 + \frac{d}{2}$. Let $(\rho_1, \sigma_1)$, $(\rho_2, \sigma_2) \in \left(L^\infty([0, T]; H^s(\T^d))\right)^2$ be two solutions of the initial value problem \eqref{rhot}--\eqref{initial}. Then $(\rho_1, \sigma_1) = (\rho_2, \sigma_2)$.
\end{lemma}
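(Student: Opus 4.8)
The plan is to run an $L^2$ energy estimate on the difference of the two solutions and close it with Grönwall's inequality, using crucially that $s > 1 + \frac{d}{2}$ so that the embedding $H^s(\T^d) \hookrightarrow W^{1,\infty}(\T^d)$ supplies uniform $L^\infty$ control of $\rho_i$, $\na\rho_i$, $\na\sigma_i$, and (through \eqref{poisson}) of $\na\Phi_i$ and $\Delta\Phi_i$. Write $\tilde\rho = \rho_1 - \rho_2$, $\tilde\sigma = \sigma_1 - \sigma_2$, $\tilde\Phi = \Phi_1 - \Phi_2$, and $\tilde u = u_1 - u_2$, and note that $\tilde\rho$ has zero average and that both differences vanish at $t = 0$. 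Subtracting the equations \eqref{rhot} and \eqref{sigmat0} for the two solutions, splitting each nonlinearity into a factor carrying a difference times a coefficient built from a single solution, and using $-\ve\Delta\tilde\Phi = \tilde\rho$, I obtain
\begin{align*}
\pt \tilde\rho &= - u_1 \cdot \na \tilde\rho - \tilde u \cdot \na \rho_2 + D \Delta \tilde\rho + D \na \tilde\sigma \cdot \na \Phi_1 + D \na \sigma_2 \cdot \na \tilde\Phi + D \tilde\sigma \Delta \Phi_1 - \tfrac{D}{\ve} \sigma_2 \tilde\rho,\\
\pt \tilde\sigma &= - u_1 \cdot \na \tilde\sigma - \tilde u \cdot \na \sigma_2 + D \Delta \tilde\sigma + D \na \tilde\rho \cdot \na \Phi_1 + D \na \rho_2 \cdot \na \tilde\Phi + D \tilde\rho \Delta \Phi_1 - \tfrac{D}{\ve} \rho_2 \tilde\rho.
\end{align*}

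First I would record the auxiliary bounds for the nonlocal differences. Since $-\ve\Delta\tilde\Phi = \tilde\rho$ with $\tilde\Phi$ of zero average, elliptic regularity and the Poincaré inequality give $\|\na\tilde\Phi\|_{L^2} \le C\|\tilde\rho\|_{L^2}$; and from Darcy's law $\tilde u = -\P(\tilde\rho\na\Phi_1 + \rho_2\na\tilde\Phi)$, so the $L^2$-boundedness of the Leray projector combined with $\|\na\Phi_1\|_{L^\infty}, \|\rho_2\|_{L^\infty} \le C$ yields $\|\tilde u\|_{L^2} \le C\|\tilde\rho\|_{L^2}$. Next I would test the two difference equations with $\tilde\rho$ and $\tilde\sigma$ respectively and add. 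The transport terms $-\int (u_1\cdot\na\tilde\rho)\tilde\rho$ and $-\int(u_1\cdot\na\tilde\sigma)\tilde\sigma$ vanish because $\na\cdot u_1 = 0$, while $-\int(\tilde u\cdot\na\rho_2)\tilde\rho$ and $-\int(\tilde u\cdot\na\sigma_2)\tilde\sigma$ are bounded by $C\|\tilde u\|_{L^2}\|\tilde\rho\|_{L^2}\le C\|\tilde\rho\|_{L^2}^2$ and $C\|\tilde u\|_{L^2}\|\tilde\sigma\|_{L^2}$. The diffusion terms yield the good dissipation $-D(\|\na\tilde\rho\|_{L^2}^2 + \|\na\tilde\sigma\|_{L^2}^2)$. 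The term $-\frac{D}{\ve}\int\sigma_2\tilde\rho^2$ is nonpositive since $\sigma_2 \ge 0$, and the zeroth-order couplings $\int\tilde\sigma\Delta\Phi_1\tilde\rho$, $\int\tilde\rho\Delta\Phi_1\tilde\sigma$, $-\frac{D}{\ve}\int\rho_2\tilde\rho\tilde\sigma$, together with $\int(\na\sigma_2\cdot\na\tilde\Phi)\tilde\rho$ and $\int(\na\rho_2\cdot\na\tilde\Phi)\tilde\sigma$, are all controlled by $C(\|\tilde\rho\|_{L^2}^2 + \|\tilde\sigma\|_{L^2}^2)$ using $\|\Delta\Phi_1\|_{L^\infty} = \ve^{-1}\|\rho_1\|_{L^\infty}\le C$ and $\|\na\tilde\Phi\|_{L^2}\le C\|\tilde\rho\|_{L^2}$.

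The hard part will be the pair of cross terms $D\int(\na\tilde\sigma\cdot\na\Phi_1)\tilde\rho$ and $D\int(\na\tilde\rho\cdot\na\Phi_1)\tilde\sigma$, each of which carries a full derivative on a difference variable and cannot be handled on the $L^2$ level by the coefficient bounds alone. The resolution is that these gradients are absorbed by the parabolic dissipation coming from the \emph{other} equation: by Young's inequality $D\int(\na\tilde\sigma\cdot\na\Phi_1)\tilde\rho \le \frac{D}{4}\|\na\tilde\sigma\|_{L^2}^2 + C\|\na\Phi_1\|_{L^\infty}^2\|\tilde\rho\|_{L^2}^2$, and symmetrically for the other, so that the $\frac{D}{4}\|\na\tilde\sigma\|_{L^2}^2$ and $\frac{D}{4}\|\na\tilde\rho\|_{L^2}^2$ are absorbed into $D(\|\na\tilde\rho\|_{L^2}^2 + \|\na\tilde\sigma\|_{L^2}^2)$. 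This is exactly why the two estimates must be summed rather than treated separately. Collecting all bounds gives
\[
\frac{\diff}{\diff{t}} \left(\|\tilde\rho\|_{L^2}^2 + \|\tilde\sigma\|_{L^2}^2\right) \leq C(t) \left(\|\tilde\rho\|_{L^2}^2 + \|\tilde\sigma\|_{L^2}^2\right),
\]
where $C(t)$ is bounded on $[0, T]$ in terms of $\sup_{[0,T]}\big(\|\rho_i\|_{H^s} + \|\sigma_i\|_{H^s}\big)$. Since $\tilde\rho(0) = \tilde\sigma(0) = 0$, Grönwall's inequality forces $\tilde\rho \equiv \tilde\sigma \equiv 0$ on $[0, T]$, hence $(\rho_1, \sigma_1) = (\rho_2, \sigma_2)$.
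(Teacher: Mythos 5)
Your proof is correct and follows essentially the same route as the paper: an $L^2$ energy estimate for the differences $(\tilde\rho,\tilde\sigma)$, using $\sigma_i\ge 0$, the $H^s\hookrightarrow W^{1,\infty}$ embedding, and Gr\"onwall's inequality. The only (immaterial) difference is that the paper integrates the cross terms $D\int(\nabla\tilde\sigma\cdot\nabla\Phi)\tilde\rho + D\int(\nabla\tilde\rho\cdot\nabla\Phi)\tilde\sigma$ by parts and uses the Poisson equation to reduce them to zeroth-order terms, whereas you absorb them into the parabolic dissipation via Young's inequality; both work.
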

\begin{proof}
We denote the differences of $(\rho_1, \sigma_1)$ and $(\rho_2, \sigma_2)$ by $(\tilde{\rho}, \tilde{\sigma}) = (\rho_1 - \rho_2, \sigma_1 - \sigma_2)$. Then the equations for $(\tilde{\rho}, \tilde{\sigma})$ are
\begin{align*}
& \pt \tilde{\rho} = - u_1 \cdot \nabla \tilde{\rho} - \tilde{u} \cdot \nabla \rho_2 + D \Delta \tilde{\rho} + D (\nabla \sigma_1 \cdot \nabla \tilde{\Phi} + \nabla \tilde{\sigma} \cdot \nabla \Phi_2) + D (\sigma_1 \Delta \tilde{\Phi} + \tilde{\sigma} \Delta \Phi_2),\\
& \pt \tilde{\sigma} = - u_1 \cdot \nabla \tilde{\sigma} - \tilde{u} \cdot \nabla \sigma_2 + D \Delta \tilde{\sigma} + D (\nabla \rho_1 \cdot \nabla \tilde{\Phi} + \nabla \tilde{\rho} \cdot \nabla \Phi_2) + D (\rho_1 \Delta \tilde{\Phi} + \tilde{\rho} \Delta \Phi_2),\\
& - \ve \Delta \tilde{\Phi} = \tilde{\rho},\\
& \tilde{u} + \nabla (p_1 - p_2) = - \tilde{\rho} \nabla \Phi_1 - \rho_2 \nabla \tilde{\Phi},\\
& \nabla \cdot \tilde{u} = 0,
\end{align*}
with initial data
\[
\tilde{\rho}(\cdot, 0) = \tilde{\sigma}(\cdot, 0) = 0.
\]
The $L^2$-estimates for $(\tilde{\rho}, \tilde{\sigma}, \tilde{u})$ lead to
\begin{align}
\label{enuniq}
\begin{split}
& \frac{1}{2} \frac{\diff}{\diff{t}} \left(\|\tilde{\rho}\|_{L^2}^2 + \|\tilde{\sigma}\|_{L^2}^2\right) + D \left(\|\nabla \tilde{\rho}\|_{L^2}^2 + \|\nabla \tilde{\sigma}\|_{L^2}^2\right) + \frac{D}{\ve} \int \tilde{\rho}^2 \sigma_1\\
&\quad= - \int \tilde{\rho} \tilde{u} \cdot \nabla \rho_2 - \int \tilde{\sigma} \tilde{u} \cdot \nabla \sigma_2 + D \int \tilde{\rho} \nabla \sigma_1 \cdot \nabla \tilde{\Phi} + D \int \tilde{\sigma} \nabla \rho_1 \cdot \nabla \tilde{\Phi} - \frac{D}{\ve} \int \tilde{\rho} \tilde{\sigma} (\rho_1 + \rho_2)\\
&\quad\leq C \left(\|\nabla \rho_2\|_{L^\infty} + \|\nabla \sigma_2\|_{L^\infty} + \|\rho_1\|_{L^\infty} + \|\rho_2\|_{L^\infty} \right) \left(\|\tilde{\rho}\|_{L^2}^2 + \|\tilde{\sigma}\|_{L^2}^2\right).
\end{split}
\end{align}
Because $\sigma_1 \geq 0$, we can drop the last two terms in the first line of \eqref{enuniq}. Finally, in view of Sobolev embeddings and Gr\"{o}nwall's inequality, we obtain $(\tilde{\rho}, \tilde{\sigma}) = (0, 0)$, which proves the uniqueness.
\end{proof}

\end{document}